\newcommand{\PP}{\mathbf{P}}
\newcommand{\EE}{\mathbf{E}}
\newcommand{\as}{\mbox{\hspace{.3cm} a.s.}}
\newcommand{\nid}{\noindent}
\newcommand{\wt}{\widetilde}
\newcommand{\var}{\text{Var}}
\newcommand{\cov}{\text{Cov}}
\newcommand{\B}[1]{\textbf{#1}}
\newcommand{\C}[1]{\mathcal{#1}}
\newcommand{\D}[1]{\mathbb{#1}}
\newcommand{\ga}{\alpha}
\newcommand{\gb}{\beta}
\newcommand{\gga}{\gamma}
\newcommand{\gd}{\delta}
\newcommand{\gep}{\varepsilon}
\newcommand{\gz}{\zeta}
\newcommand{\gl}{\lambda}
\newcommand{\gs}{\sigma}
\newcommand{\ol}[1]{\overline{#1}}
\newcommand{\ul}[1]{\underline{#1}}
\newtheorem{theorem}{Theorem}
\newtheorem{proposition}{Proposition}
\newtheorem{corollary}{Corollary}
\newtheorem{lemma}{Lemma}
\theoremstyle{remark}
\def\blfootnote{\gdef\@thefnmark{}\@footnotetext}
\begin{document}

\title{Metastable states  \\ in Brownian energy landscape}

\author{Dimitris Cheliotis}

\author{
\renewcommand{\thefootnote}{\arabic{footnote}}
Dimitris\ Cheliotis
\footnotemark[1]
}

\footnotetext[1]{
{Department of Mathematics, University of Athens,
Panepistimiopolis, 15784 Athens, Greece}, 
{\sl dcheliotis@math.uoa.gr}
}

\blfootnote{
 MSC2010 Subject Classifications: 60K37, 60G55, 60F05}
 
\blfootnote{Keywords: Diffusion in random environment, Brownian
motion, excursion theory, renewal cluster
process, confluent hypergeometric equation. 
}

\blfootnote{This research has been co-financed by the European Union and Greek
national funds through the Operational Program "Education and Lifelong Learning" of the National Strategic Reference Framework (NSRF), (ARISTEIA I, MAXBELLMAN 2760).}

\date{August 13, 2013}

\maketitle

\begin{abstract} Random walks and diffusions in symmetric random
environment are known to exhibit metastable behavior: they tend to
stay for long times in wells of the environment. For the case that the
environment is a one-dimensional two-sided standard Brownian motion,
we study the process of depths of the consecutive wells of increasing
depth that the motion visits. When these depths are looked in
logarithmic scale, they form a stationary renewal cluster process. We
give a description of the structure of this process and derive from it
the almost sure limit behavior and the fluctuations of the empirical
density of the process.
\end{abstract}

\section{Introduction and statement of the results}

Consider $(X_t)_{t\ge0}$ Brownian motion with drift in $\D{R}$, starting from 0, with the drift at each point $x\in\D{R}$
being $-f'(x)/2$ for a certain differentiable function $f$.
That is, $(X_t)_{t\ge0}$ satisfies the SDE
$$dX_t=d\gb_t-\frac{1}{2}f'(X_t) dt,$$
with $\gb$ a standard Brownian motion. This is called diffusion in the
environment $f$, and it has $e^{-f(x)}\ dx$ as an invariant measure. In
statistical mechanics terms, $f$ gives the energy profile, and the above SDE
defines the Langevin dynamics for the corresponding measure $e^{-f(x)}\ dx$.
The diffusion likes to go downhill on the environment $f$, decreasing the
energy, and thus it tends to stay around local minima of $f$.
If the set $M_f$ of local minima of $f$ is
non-empty, the diffusion exhibits metastable behavior,
with metastable states being the points of $M_f$ (see \citet{BO}, Section 8).

Now, for each point $x_0$
of local minimum, there are intervals $[a,c]$ containing
$x_0$ with the property that $f(x_0)$ is the minimum value of $f$ in $[a, c]$
and $f(a), f(c)$ are the maximum values of $f$ in the intervals $[a, x_0], [x_0,
c]$ respectively.
Let $J(x_0):=[a_{x_0}, c_{x_0}]$ be
the maximal such interval. This is the ``interval of influence''
for $x_0$.
 We call
$f|J(x_0)$ the \textbf{well} of $x_0$, the number
$\min\{f(a_{x_0})-f(x_0), f(c_{x_0})-f(x_0)\}$ the \textbf{depth}
of the well, and $x_0$ the \textbf{bottom} of the well. If the diffusion starts
inside $J(x_0)$,
typically it is trapped in that interval for a time that depends
predominantly on the depth of the well.

Also, for $h>0$, we say that the local minimum $x_0$ is
a point of \textbf{$h$-minimum} for $f$ if the depth of its well is at least $h$, while a point $x_0$ is called a point of \textbf{$h$-maximum} for $f$ if it is a point of $h$-minimum for $-f$.

A case of particular interest is the one where the function $f$ above is a ``typical'' two sided Wiener path with $f(0)=0$. Of course, an $f$ picked from the Wiener
measure is not differentiable, but there is a way to make sense of the above SDE
 defining $X$ through a time and space transformation. See \citet{SHI} for the
construction.

From now on, we will denote the two sided Wiener path with $B$.
Due to the nature of a typical Wiener path, once the diffusion exits an interval $J(x_0)$, it is trapped in another well. We will define a process $x_B$ that records some local minima of the path of $B$ in the order that are visited by a typical
diffusion path, but not all of them. Roughly, assuming that the value of the process at some point is $x_0$, its next
value is going to be the unique local minumum $x_1$ whose interval of influence is the smallest one satisfying $J(x_1)\supsetneqq J(x_0)$. The well $B|J(x_1)$ is the minimal one containing strictly $B|J(x_0)$, it is the first well right after
$J(x_0)$ that can trap the diffusion for considerably more time, and this because it has greater depth.

The formal definition of the process $x_B$ goes as follows. With probability one, for all $h>0$, there are $z_{-1}(h)<0<z_1(h)$ points of $h$-extremum ($h$-mimimum or $h$-maximum) for $B$ closest to zero from the left and right respectively. Exactly one of them is a point of
$h$-minimum for $B$. This we denote by $x_B(h)$.

The process $(x_B(h))_{h>0}$  has piecewise constant paths, it is
left continuous, and there are several results showing its impact
on the behavior of the diffusion. For example, $X_t-x_B(\log t)$
converges in distribution as $t\to+\infty$ (\citet{TA}), i.e., $x_B$ gives a
good prediction for the location $X_t$ of the diffusion at large times.
 Note also that, by Brownian scaling, for $a>0$ the process $x_B$ satisfies
\begin{equation}\label{xBscaling}
(x_B(ah))_{h>0}\overset{d}{=}(a^2x_B(h))_{h>0}.
\end{equation}

We would like to study the set of points where $x_B$ jumps, because this shows
how frequently the diffusion discovers the bottom of a well that is deeper than
any well encountered by then. It turns out that it is more
convenient to consider this set in logarithmic scale, that is,
the point process
$$\xi:=\{t\in\D{R}: x_B \mbox{ has a jump at } e^t\}.$$
The purpose of this work is to describe the structure of $\xi$.
A crucial observation is that the law of $\xi$ is translation invariant
because of the scaling relation \eqref{xBscaling} for
$x_B$. Since $B$ is continuous, the set $\xi$ has no finite accumulation
point.

For any set $A$ define $N(A):=|\xi\cap A|$, the cardinality of
$\xi\cap A$, i.e., $N$ is the counting measure induced by $\xi$. When
$A$ is an interval, we  will write $NA$ instead of $N(A)$.

The following result (Theorem 2.4.13 in \citet{ZE}) gives the probability that
$\xi$ does not hit an interval.
\begin{theorem}[Dembo, Guionnet, Zeitouni] \label{DGZThm}
For $t>0$,
\begin{equation}\PP(N[0,t]=0)=\frac{1}{t^2}\left(\frac{5}{3}-\frac{2}{3}e^{1-t}
\right).
\end{equation}

\end{theorem}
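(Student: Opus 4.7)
The plan is to translate the event $\{N[0,t]=0\}$ into a condition on the depth of a single well of $B$, reduce to the positive half by symmetry, and then compute the resulting probability using Brownian first-passage identities.

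Step 1 is the key reformulation: $\{N[0,t]=0\}$ is equivalent to the event that the well of $x_B(1)$ has depth greater than $e^t$. Indeed, as $h$ increases past $1$ the set of $h$-extrema can only shrink---an extremum is lost precisely when $h$ crosses its well depth---so the closest $h$-extremum to $0$ on each side can only recede, and $x_B(h)$ remains equal to $x_B(1)$ until $h$ reaches the depth of $J(x_B(1))$, at which point $x_B$ must jump. Hence $\PP(N[0,t]=0)=\PP(\text{depth}(J(x_B(1)))>e^t)$. Using the symmetry $B(\cdot)\stackrel{d}{=}B(-\cdot)$ I then reduce to $\{x_B(1)>0\}$, at the cost of a factor of $2$; on this event $m_0:=x_B(1)$ is the first local minimum of $B$ on the right of $0$ with well depth $\geq 1$ and no $1$-extremum in $(0,m_0)$, and its well $J(m_0)=[a,c]$ straddles $0$ with $a\leq 0\leq m_0\leq c$.

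For the explicit computation, I split $\text{depth}(J(m_0))=\min(L,R)$ with $R=B(c)-B(m_0)$ and $L=B(a)-B(m_0)$. By the strong Markov property of $B$ at $m_0$, the right piece $B|_{[m_0,\infty)}$ is a Brownian motion started from $B(m_0)$ independent of $B|_{(-\infty,m_0]}$, so $\{R>e^t\}$ becomes a standard first-passage event on an independent Brownian path. The event $\{L>e^t\}$ couples $B|_{[0,m_0]}$, conditioned on $m_0$ being the first $1$-min on the right with no intervening $1$-extremum, with the independent Brownian motion on $(-\infty,0]$; this can be evaluated by parameterizing the heights of the two adjacent $1$-maxima and invoking analogous first-passage identities (of type $\PP(\text{BM from }b\text{ hits }a\text{ before }0)=b/a$). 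Integrating the product against the joint density of these heights will produce the polynomial factor $1/t^2$ out of an integrand of order $h^{-3}\,dh$ in the height variable $h$ and the exponential $e^{1-t}$ out of a residual boundary contribution at $h=1$.

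The main obstacle will be the careful handling of the left-depth factor, in particular the joint bookkeeping of the positive and negative halves of $B$: one must condition $B|_{[0,m_0]}$ on the event that $m_0$ is the first $1$-min with no intervening $1$-extremum, and simultaneously couple this with the Brownian motion on $(-\infty,0]$ that provides the rest of the left side of $J(m_0)$. Obtaining the precise constants $5/3$ and $2/3$ will require an explicit integration, which, in light of the paper's keywords, I expect is cleanest when expressed via a Laplace transform of the relevant Brownian functional satisfying a confluent hypergeometric equation with known closed-form solutions.
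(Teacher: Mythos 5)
The paper does not prove this theorem: it is quoted as Theorem 2.4.13 of Zeitouni's Saint-Flour notes, so there is no in-paper argument to compare against. I will therefore assess your proposal on its own.

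Your Step 1 is correct and is a genuinely useful reduction. For $h\in[1,d]$, where $d$ is the depth of $J(x_B(1))$, the point $x_B(1)$ remains an $h$-minimum and remains the nearest $h$-extremum on its side of $0$, so $x_B$ is constant on $[1,d]$ and jumps at $d$; hence $\{N[0,t]=0\}$ coincides (a.s.) with $\{d>e^t\}$. The symmetry reduction to $\{x_B(1)>0\}$ is also sound.

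The proposal then has a genuine gap. You invoke ``the strong Markov property of $B$ at $m_0$'' to assert that $B\vert_{[m_0,\infty)}$ is a Brownian motion independent of $B\vert_{(-\infty,m_0]}$. But $m_0=x_B(1)$ is not a stopping time: whether $m_0$ is a $1$-minimum (hence the closest one) is decided by the behavior of $B$ strictly after $m_0$, namely that $B$ rises at least $1$ above $B(m_0)$ before dropping below $B(m_0)$. The conditional law of $B\vert_{[m_0,\infty)}$ given the defining event for $m_0$ is therefore not that of a free Brownian motion; one is forced into a Williams-type decomposition (e.g.\ a $3$-dimensional Bessel process until the slope first reaches level $1$, as the paper itself uses in Section~\ref{InsideAnExcursion}, or the Neveu--Pitman renewal description of $1$-extrema). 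The left side is even more entangled, since the path on $[0,m_0]$ and on $(-\infty,0]$ must together realize the left half of the well, and the piece straddling $0$ is size-biased rather than a typical slope. Your sentence ``this can be evaluated by parameterizing the heights\ldots and invoking analogous first-passage identities'' therefore glosses over the central difficulty.

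Beyond this, the final step is not a proof but a forecast: you predict that the integral ``will produce the polynomial factor $1/t^2$ out of an integrand of order $h^{-3}\,dh$'' and that $e^{1-t}$ will appear ``out of a residual boundary contribution at $h=1$,'' and that the constants $5/3$, $2/3$ ``will require an explicit integration,'' but none of these computations is exhibited, and the appeal to a confluent hypergeometric equation is a guess rather than a derivation. To turn the proposal into a proof you would need (a) to replace the strong Markov step with a correct path decomposition at the non-stopping time $m_0$ (Williams or Neveu--Pitman), and (b) to actually perform the resulting integration and verify the stated constants.
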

This allows us to compute the mean density, $\EE N(0, 1]$, of the process, because for a simple
stationary point process, its mean density equals also its intensity $\lim_{t\to
0^+} t^{-1}\PP(N(0,t]>0)$ (Proposition 3.3 IV in \citet{DVJ}). Thus we
get the following result, which has been predicted
by physicists (relation (84) in \citet{DMF}) via
renormalization arguments.
\begin{corollary}[Mean density] \label{frequency} For every Borel
set
$A\subset\D{R}$,
$\EE N(A) =\frac{4}{3} \gl (A)$, where $\gl$ is
Lebesgue measure. Moreover,
\begin{equation} \label{meanDensity}
 \lim_{t\to \infty} \frac{N[0,t]}{t} = \frac{4}{3} \as
\end{equation}
\end{corollary}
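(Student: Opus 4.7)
The corollary contains two statements: the intensity-measure identity and the almost-sure law \eqref{meanDensity}; I address them in turn.

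\textbf{Intensity.} Since $\xi$ is a simple stationary point process on $\D{R}$ --- stationarity from the Brownian scaling \eqref{xBscaling}, simplicity from the a.s.\ local finiteness of the jump set of $x_B$ --- its intensity measure $A \mapsto \EE N(A)$ is translation-invariant and hence equals $m\lambda$ for some constant $m \in [0, \infty]$. It therefore suffices to compute $m = \EE N(0, 1]$. Invoking the Korolyuk identity (Proposition~3.3.IV in \citet{DVJ}) cited in the text,
$$m = \lim_{t \to 0^+} \frac{\PP(N(0, t] > 0)}{t},$$
and noting that $\PP(0 \in \xi) = 0$ (because $\EE \lambda(\xi \cap [0, 1]) = \PP(0 \in \xi)$ by stationarity, while the left side vanishes since $\xi$ is a.s.\ discrete), one has $\PP(N(0, t] > 0) = 1 - \PP(N[0, t] = 0)$. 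Substituting the expression from Theorem~\ref{DGZThm} and computing the limit yields $m = 4/3$. The identity $\EE N(A) = (4/3) \lambda(A)$ then extends from bounded intervals to all Borel sets by a standard monotone-class argument.

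\textbf{Almost-sure law.} For \eqref{meanDensity} I would apply Birkhoff's pointwise ergodic theorem to the stationary counting measure $N$ under translations on $\D{R}$. This produces a.s.\ convergence of $N[0, t]/t$ to a random limit that equals the mean density $4/3$ on each ergodic component, reducing the task to proving that the shift action on the law of $\xi$ is ergodic. Ergodicity I would derive from mixing of the underlying Brownian environment $B$ at distant scales: by Brownian scaling, translation of $\xi$ by $T$ corresponds to rescaling $B$ by a factor $e^{-T}$, and the increments of $B$ on disjoint scale intervals become asymptotically independent as $T \to \infty$; this independence is inherited by the local jump structure of $x_B$ and hence by $\xi$.

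\textbf{Main obstacle.} The Korolyuk computation and the monotone-class extension to arbitrary Borel sets are short once Theorem~\ref{DGZThm} is in hand. The substantive step is the ergodicity argument: translating asymptotic independence of $B$ at well-separated scales into mixing for $\xi$ needs care because the construction of $x_B$ is nonlocal in $B$ --- the position of an $h$-extremum can depend on values of $B$ far from the scale interval under consideration --- so one must check that the probability of such long-range dependence is negligible on the scales relevant to counting jumps of $x_B$.
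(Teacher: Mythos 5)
Your intensity computation follows exactly the route the paper sketches immediately after Theorem~\ref{DGZThm}: invoke the Korolyuk identity for a simple stationary point process and substitute the formula from Theorem~\ref{DGZThm}. That part is correct and is one of the paper's two routes to the mean density.

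For the almost-sure law \eqref{meanDensity}, however, you take a genuinely different route from the paper's Section~\ref{elementaryProof}, and your version has a real gap that you yourself flag. The paper does not establish ergodicity of the translation flow on $\xi$ at all. Instead it exploits the renewal cluster structure built in Sections~\ref{structuresection}--\ref{GFComputation}: with $(S_k)$ the renewal $\psi$ and $\C{N}_k = N[S_{k-1}, S_k)$, the pairs $(X_k,\C{N}_k)_{k\ge1}$ are genuinely i.i.d., so the strong law gives $N[0,S_n]/S_n \to \EE(\C{N})/\EE(X_1) = 4/3$ directly, and interpolation finishes \eqref{meanDensity}. The mean density is then \emph{derived from} the a.s.\ limit using the $L^1$ conclusion of the ergodic theorem applied to the stationary sequence $N[(k-1)a,ka)$, which requires no ergodicity because the random limit $G$ is identified as the constant $4/3$ from \eqref{CorrolaryLimit}. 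You run this implication in the opposite direction (mean density $\Rightarrow$ a.s.\ limit), and that direction genuinely requires ergodicity, which you leave as an unproved assertion. Your sketch of why ergodicity should hold --- asymptotic independence of Brownian increments across well-separated scale windows --- is not yet an argument, precisely because of the nonlocality you point out: $x_B(h)$ depends on $B$ over the random interval $[H^-_h,H^+_h]$, whose length has no uniform bound on any scale. A cleaner way to close the gap, if you wish to keep your strategy, is to observe that the backward tail $\bigcap_{T>0}\sigma\big(\xi\cap(-\infty,-T]\big)$ is contained in $\bigcap_{\delta>0}\sigma\big(B|[-\delta,\delta]\big)$, the germ $\sigma$-field of $B$ at the origin, since $[H^-_h,H^+_h]\to\{0\}$ as $h\to0^+$; this is trivial by Blumenthal's $0$--$1$ law, and triviality of one tail suffices for ergodicity of the shift. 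Without some such argument, your proof of \eqref{meanDensity} is incomplete, whereas the paper's renewal/LLN argument is self-contained once Theorem~\ref{ClusterForm} and $\EE(\C{N})=2$ (Section~\ref{expComputation}) are in hand. In short: Part 1 matches the paper; Part 2 is a different and, as written, incomplete route, while the paper's Section~\ref{elementaryProof} proof avoids both Theorem~\ref{DGZThm} and ergodicity entirely.
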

\nid In Section \ref{elementaryProof}, we give an easy proof of this
corollary  which avoids the use of Theorem \ref{DGZThm}. 

Combining this with well known localization results for the
diffusion, we infer that the diffusion jumps to a deeper well
extremely rarely, at times that progress roughly as $\exp(\exp(3n/4))$. We also remark that for
the process $\hat \xi:=\{t: x_B \text{ changes sign at } e^t \}$, which is a subset of $\xi$, it was
shown in \citet{C} that it has mean density 1/3. On
average, one in every
four consecutive jumps is a sign change.

The description of $\xi$ given in the coming subsection has the following implication. 

\begin{theorem}[Fluctuations] \label{fluctuations}
As $t\to\infty$, the following convergence in distribution holds:
\begin{equation}\frac{1}{\sqrt{t}}\left(N[0, t]-\frac{4}{3}t\right) \Rightarrow
\mathcal{N}(0, \gs^2),
\end{equation}
with 
$\sigma^2=
\frac{64}{27}-\frac{4}{9}\int_0^\infty e^{-t}(1+t)^{-1}\, dt\approx 2.105327 $
\end{theorem}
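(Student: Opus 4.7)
The plan is to exploit the renewal cluster description of $\xi$ that is announced in the following subsection of the paper. Once $\xi$ is realized as a stationary renewal cluster process, with cluster centers forming an (equilibrium) renewal process of generic inter-arrival $T$, each center carrying an independent cluster of random size $C$, the count $N[0,t]$ can be written as a compound renewal sum
\[
N[0,t] = \sum_{i=1}^{M(t)} C_i + R(t),
\]
where $M(t)$ is the number of cluster centers in $[0,t]$, the $(C_i)$ are i.i.d.\ copies of $C$ independent of the renewal process, and $R(t)$ is a boundary correction collecting the two edge clusters straddling $0$ and $t$. Provided $C$ has finite second moment, $R(t)$ is tight, so it contributes $o(\sqrt{t})$ to the fluctuations.

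My first step would then be to invoke the classical central limit theorem for compound renewal processes: under finiteness of $\EE T$, $\var T$, $\EE C$, $\var C$,
\[
\frac{1}{\sqrt{t}}\Bigl( \sum_{i=1}^{M(t)} C_i - \frac{\EE C}{\EE T}\, t \Bigr) \Rightarrow \C{N}(0,\gs^2), \qquad \gs^2 = \frac{\var C}{\EE T} + \frac{(\EE C)^2\, \var T}{(\EE T)^3}.
\]
Combined with Corollary \ref{frequency}, which already pins down $\EE C/\EE T = 4/3$, the conclusion of Theorem \ref{fluctuations} reduces to computing the four moments $\EE T$, $\var T$, $\EE C$, $\var C$ from the explicit law of a single cluster.

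The second step is precisely this moment computation. Both the cluster size distribution and the inter-arrival distribution originate from excursion calculations around the minima of $B$ and, as the keywords of the paper suggest, involve the confluent hypergeometric function. I expect $\EE T$ and $\EE C$ to be clean rationals consistent with the observed mean density $4/3$, while the contributions from $\var T$ and $\var C$ should yield the non-elementary piece $\tfrac{4}{9}\int_0^\infty e^{-t}(1+t)^{-1}\,dt$, likely after conditioning on the depth $h$ of the dominant well in a cluster and integrating against the exponential density of $h$.

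The main obstacle is the second step: converting the hypergeometric representation of the cluster law into closed-form values for $\var T$ and $\var C$. The CLT for compound renewal sums is standard, and tightness of $R(t)$ is routine once cluster sizes are shown to have a finite second moment. All the genuine difficulty lies in the moment arithmetic, where I would expect one or two integrations by parts and a change of variables to collapse the residual integrals onto the exponential integral $\int_0^\infty e^{-t}(1+t)^{-1}\,dt$, the single non-algebraic constant in the stated $\gs^2$.
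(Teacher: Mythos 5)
Your high-level strategy is the one the paper uses: decompose $N[0,t]$ along the skeleton $\psi$ of cluster centers, apply a renewal CLT, and evaluate the required moments from the explicit (hypergeometric) description of a cluster. But the proposal contains a genuine error at exactly the point you flag as routine: you posit that the cluster sizes $C_i$ are ``independent of the renewal process'' and then quote the classical compound-Poisson/renewal CLT with
\[
\gs^2 \;=\; \frac{\var C}{\EE T} \;+\; \frac{(\EE C)^2\,\var T}{(\EE T)^3},
\]
a formula which is correct \emph{only} under that independence. In this process, the cluster at a center $x$ is distributed as $\C{T}$ conditioned on $R=x^{+}-x$ (Theorem \ref{ClusterForm}), so the cluster size and the inter-arrival time to the next center are not independent. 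Concretely, the joint law of (inter-arrival, cluster size) is that of $(F+Z,\,\C{N})$, with $Z$ an independent Exponential$(2)$ but $F$ and $\C{N}$ positively correlated (as the paper notes before Theorem \ref{MGFProposition}). Your formula for $\gs^2$ therefore omits the cross term $-\dfrac{2\,(\EE C/\EE T)}{\EE T}\,\cov(\C{N},F)$, which is nonzero; in fact it is one of the sources (via $\partial_{xy}\Psi$) of the non-elementary constant $\int_0^\infty e^{-t}(1+t)^{-1}\,dt$, so dropping it would change the stated $\gs^2$.

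The fix is to carry the joint law rather than factor it: set $Y_k=\C{N}_k-aX_k$ with $a=\EE\C{N}/\EE X=4/3$, observe that the $(X_k,\C{N}_k)$ are i.i.d.\ with law $(F+Z,\C{N})$, apply the CLT to the centered i.i.d.\ sequence $(Y_k)$, then pass from fixed $k$ to fixed time $t$ via the renewal theorem and tightness of the boundary terms (Anscombe-type argument). This yields $\gs^2=\var(Y_1)/\EE(X_1)$ directly, and $\var(Y_1)=\var(\C{N})+a^2\var(F+Z)-2a\cov(\C{N},F)$ is then computed from Theorem \ref{MGFProposition} together with the $\Psi$-recurrences. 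Your instinct that $\EE T=3/2$ and $\EE C=2$ are clean rationals, and that the constant $\int_0^\infty e^{-t}(1+t)^{-1}\,dt$ emerges from the hypergeometric derivatives, is correct; but be aware that $\var(F+Z)=5/4$ is itself elementary, and the non-elementary part comes from $\var(\C{N})$ and $\cov(\C{N},F)$, not from $\var T$.
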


\subsection{The structure of the process $\xi$} \label{structureSSection}

$\xi$ is a renewal cluster process in $\D{R}$. That is, it consists of: 

(i) a skeleton of points that serve as ``centers'' of clusters, 

\nid together with

(ii) the cluster points.  

\nid The centers form a stationary renewal process in $\D{R}$. Then each
cluster is distributed in a certain way relative to its center (to be exact,
relative to the skeleton).

More specifically, let $\psi$ be a stationary renewal
process in $\D{R}$ with interarrival distribution that of the sum $W_1+W_2$ of
two independent random variables with $W_1\sim$ Exponential(1), $W_2\sim$
Exponential(2). $\psi$ is the ``centers'' process.

Next, we describe the law of a cluster with center at 0.

\nid Count the points of a Poisson point process in $[0, \infty)$ with rate 1 as $(t_k)_{\ge2}$ in
increasing order, and let $t_1=0$. Out of the points
$$t_1, t_2, \ldots$$
we will keep only the first $\C{N}$, where $\C{N}$ is defined as
follows. Take a sequence $(Y_i)_{i\ge1}$ of i.i.d. random variables, independent
of $(t_k)_{k\ge2}$, each with distribution Exponential(1). Define recursively a
sequence $(z_k)_{k\ge1}$ as follows:
\begin{align*}
z_1&:=1\\
z_{k+1}&:=z_k+Y_k\, e^{t_k} \text{ for $k\ge1$},
\intertext{and let} 
\C{N}&:=\max\{i:t_i\le\log z_i\},\\
\C{T}&:=\{t_1, t_2, \ldots, t_{\C{N}}\}.
\end{align*}
$\C{N}$ is finite with
probability 1 as we will see in Theorem \ref{MGFProposition}.

A cluster with center at 0 has the law of $\C{T}$. 

\nid Let also 
\begin{align*}
F&:=\log z_{\C{N}+1}. \\
\intertext{Independent of $(t_k)_{k\ge1}, (Y_i)_{i\ge1}$ take another random variable
$Z\sim$ Exponential(2), and let}
R&:=F+Z.
\end{align*}
 
 \begin{figure}[htbp]
 \begin{center}
  \resizebox{11cm}{!} {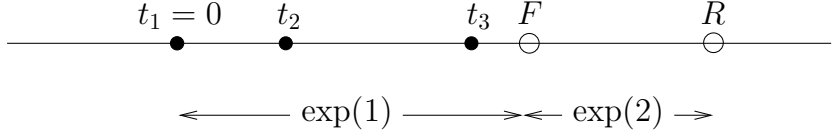} \caption{A typical cluster with
center at $t_1$. Points appear at an interval with length distribution $\exp(1)$. This cluster has 3 points, marked with black dots. The next cluster right to it will have its
center at $R$. The interval between $F$ and $R$ has length distribution $\exp(2)$, and it is not allowed to have points.}
 \label{fig1}
 \end{center}
 \end{figure}
\nid Note that $\C{T}\subset [0, F)$. We will see in Section \ref{InsideAnExcursion} that $F\sim
\text{Exponential}(1),$ while, by construction, $R-F\sim \text{Exponential}(2)$.

The role of $F$ and $R$ is the following. Given that $x$ is a point in the process of the centers,  the cluster at $x$ has
law $x+\C{T}$, while the next cluster to the right of it has center at $x+R$,
and thus distributed as $x+ R+\C{T}'$, with $\C{T}'$ an independent copy of
$\C{T}$.

And we are now ready to give the  formal description of $\xi$. 
For each $x\in\psi$ let $x^+:=\inf\{y\in \psi: y>x\}$, the nearest right neighbor of $x$ in $\psi$.

\begin{theorem} \label{ClusterForm} $\xi$ has the same law as
$$\bigcup_{x\in \psi}\big\{x+\C{T}_x(x^+-x)\big\},$$
where $\{\C{T}_x(x^+-x): x\in \psi\}$ are independent, and $\C{T}_x(x^+-x)$ is
distributed as $\C{T}$ given that $R=x^+-x$.
\end{theorem}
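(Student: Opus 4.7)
The plan is to realize $\xi$ as a cluster process by decomposing the depth history of $x_B$ according to a sequence of regeneration epochs, which will turn out to be the centers $\psi$. The guiding intuition is that at certain depths, the current well $J(x_B(e^t))$ has just been replaced by a brand new well on the opposite side of zero in such a way that the past depth history becomes independent of the future; these are the renewal epochs. Between two consecutive centers $x, x^+ \in \psi$, every jump of $x_B$ occurs while the bottom stays on the same side of zero, and these jumps form a cluster.

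I would begin by establishing the renewal structure for the centers. Using the scaling relation \eqref{xBscaling} and the strong Markov property of two-sided Brownian motion, one shows that the inter-center increments are i.i.d. To compute their distribution, decompose each increment into two independent pieces: the log-depth span $F$ of the cluster sitting at the left endpoint, and the log-depth gap $Z$ between the deepest cluster point and the next deeper well, which must lie on the opposite side of the current bottom and initiates the next cluster. By excursion theory for Brownian motion at the level of the current well bottom, $F$ should come out as $\text{Exponential}(1)$ (from the Markov property applied to a deepening excursion), while $Z$ will be $\text{Exponential}(2)$ from the two independent sides of $B$ competing to produce the next deeper well on the opposite side.

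Next, I would derive the cluster law conditional on a center at $0$. Inside a cluster, successive jump times $t_1<t_2<\cdots$ correspond to the discovery of deeper and deeper wells, all on the same side of $0$. By Brownian scaling applied iteratively, the log-depths of these wells form a rate-$1$ Poisson process as long as the cluster continues. The ``same side'' constraint is enforced by tracking an independently evolving competitor well on the opposite side: in log-scale its depth is encoded by the recursion $z_{k+1}=z_k+Y_k e^{t_k}$, where $Y_k\sim\text{Exponential}(1)$ records a fresh exponential increment and $e^{t_k}$ reflects the fact that the spatial range swept to reach depth $e^{t_k}$ scales like $e^{t_k}$ by \eqref{xBscaling}. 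The cluster terminates at the first $i$ with $t_i>\log z_i$, which is exactly the event that the opposite-side competitor becomes deeper and triggers the next center. Independence across clusters then follows from the strong Markov property at center epochs, combined with the explicit matching of the conditional law of the cluster given the span to the claim that $\mathcal{T}_x(x^+-x)$ is distributed as $\mathcal{T}$ conditioned on $R=x^+-x$.

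The main technical obstacle is the rigorous derivation of the recursion $z_{k+1}=z_k+Y_k e^{t_k}$. One must translate the purely geometric condition ``the next $h$-minimum is on the same side of zero as the current one'' into the algebraic inequality $t_i\le\log z_i$, and this requires an exact accounting of how the spatial range swept by deepening Brownian excursions on one side grows multiplicatively with depth. The exponential increments $Y_k$ must be extracted from the first-passage structure of the environment at a level, while the multiplicative factor $e^{t_k}$ must be produced cleanly from the Brownian scaling. Carefully assembling these pieces, and verifying that the joint law of $(F,\mathcal{T})$ produced by the excursion analysis coincides with the joint law built from the Poisson construction and the stopping rule, will constitute the bulk of the work.
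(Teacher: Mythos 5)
Your high-level skeleton (centers are regeneration epochs; the interarrival is $\mathrm{Exp}(1)+\mathrm{Exp}(2)$; inside a cluster the log-depths look Poisson(1); there is a secondary recursion producing the stopping rule) matches the architecture of the paper's proof, but the geometric identification of the ingredients is wrong in a way that would block the argument from going through.

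The centers $\psi$ are \emph{not} the sign changes of $x_B$. In the paper, $\psi=\{\log h(\ell):\ell\in\C{L}\}$ where $\C{L}$ is the jump set of the record-depth process $\Theta_\ell$ from \eqref{Theta}: $\Theta$ jumps whenever, on \emph{either} side of zero, a new excursion of $\ol B - B$ descends below all previous ones. The triggering excursion can perfectly well be on the \emph{same} side as the one that held the previous record, so a center need not correspond to $x_B$ changing sign. This is consistent with the densities: the renewal $\psi$ has mean spacing $\EE(W_1+W_2)=3/2$, hence density $2/3$, while the sign-change process $\hat\xi$ is cited in the paper as having density $1/3$. Your intuition identifies $\psi$ with $\hat\xi$, which is the proper subset of $\psi$ where the new record excursion happens to land on the opposite side; if you built the renewal from $\hat\xi$ you would find a different interarrival law (mean $3$, not $3/2$) and would need $\EE(\C{N})=4$ to recover Corollary~\ref{frequency}, contradicting $\EE(\C{N})=2$.

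This misidentification propagates into the cluster mechanism. In the paper, each cluster is the set of jumps of $x_B$ \emph{inside a single excursion} $\gep$ with law $n(\,\cdot\,|\,\ol\gep\ge h)$, which is realized as a Bessel(3) descent to $-h$ followed by an independent Brownian motion run until it returns to $0$. The pair $(z_k,t_k)$ in the abstract construction encodes, respectively, the running depth $|y_{k-1}|/h$ of \emph{this same one-sided excursion} and the log-height $\log(\eta_{k-1}/h)$ of its deepest sub-excursion so far; the increment $z_{k+1}-z_k=Y_ke^{t_k}$ comes from the $\mathrm{Exp}(1/\eta_k)$ local-time wait for a sub-excursion exceeding the previous height, not from an ``independently evolving competitor on the opposite side.'' Likewise the stopping rule $t_i>\log z_i$ (i.e.\ $\eta_{i-1}>|y_{i-1}|$) detects the moment the sub-excursion reaches back above the excursion's start, i.e.\ the moment $\gep$ ends and the Brownian path emerges to find a new record — it is not the event that the other side of zero takes over. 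Finally, the $\mathrm{Exp}(2)$ gap is the competition between the two sides to raise $\ol B$ by a new amount without either of them going below the current record minimum; the winner (the next center) can be on either side. As written, your plan asks you to derive the recursion from a two-sided competition picture that the Brownian geometry does not support, so the ``main technical obstacle'' you flag would in fact be unresolvable without first replacing sign changes by jumps of $\Theta$ and re-reading the recursion as a one-sided excursion decomposition.
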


Finally, we look closer into the law of a cluster. The random variables
$\C{N}, F$ are positively correlated, and the following result captures their
joint distribution. 
For its statement, we will use the confluent
hypergeometric function of the second kind, which is usually  denoted by
$\Psi$. This has three arguments, and its value at a point $(x, y, z)$
is denoted by $\Psi(x, y; z)$.

\begin{theorem} \label{MGFProposition}
The moment generating function of $(\C{N}, F)$ equals 
\begin{equation}
\EE(e^{\gl \C{N}+\mu F })=e^{\gl}\frac{\Psi(1-e^{\gl},
1+\mu;1)}{\Psi(-e^{\gl}, \mu;1)}
\end{equation}
 for all $(\gl, \mu)\in\D{R}^2$ where the generating function is finite.  This
set of $(\gl, \mu)$
is open, convex,
and contains $(0, 0)$. In particular, $\EE(\C{N})=2$.
\end{theorem}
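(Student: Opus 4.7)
My strategy is to embed $(\C{N}, F)$ as a functional of a one-dimensional Markov chain, derive an integral equation for the associated moment generating function, and reduce this to Kummer's confluent hypergeometric equation. The first move is to renormalize: set $V_k := z_k e^{-t_k}$, $T_k := t_{k+1} - t_k \sim \text{Exp}(1)$ and $U_k := e^{-T_k} \sim \text{Unif}(0,1)$. Then $V_1 = 1$ and $V_{k+1} = (V_k + Y_k)U_k$, the event $\{t_i \le \log z_i\}$ equals $\{V_i \ge 1\}$, and $F = \log V_{\C{N}+1} + t_{\C{N}+1}$. To unlock the Markov property I extend the problem: for $v \ge 1$, let $\phi(v) := \EE_v[\exp(\gl\C{N} + \mu F)]$ denote the moment generating function when the chain starts from $V_1 = v$ at time $t_1 = 0$; the target is $\phi(1)$.

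One-step conditioning on $(Y_1, U_1)$ splits into two cases. If $(v+Y_1)U_1 < 1$ then $\C{N} = 1$ and $F = \log(v+Y_1)$; otherwise $\C{N} = \C{N}' + 1$ and $F = F' + T_1$ with $(\C{N}', F')$ distributed as the analogous observables of the chain started at $V_2 = (v+Y_1)U_1$, producing a factor $e^{\gl}e^{\mu T_1} = e^{\gl} U_1^{-\mu}$. Integrating out $Y_1 \sim \text{Exp}(1)$ and $U_1 \sim \text{Unif}(0,1)$, then substituting $s = (v+y)u$ and $w = v+y$, gives
\begin{equation*}
e^{-v}\phi(v) = e^{\gl}\int_{v}^{\infty} e^{-w}\, w^{\mu-1}\, \Phi(w)\, dw, \qquad \Phi(w) := 1 + \int_{1}^{w} s^{-\mu}\phi(s)\, ds.
\end{equation*}
In particular $\Phi(1) = 1$ and $\phi(v) = v^{\mu}\Phi'(v)$. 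Differentiating in $v$ and clearing factors of $e^{-v}$ and $v^{\mu-1}$ yields
\begin{equation*}
v\,\Phi''(v) + (\mu - v)\,\Phi'(v) + e^{\gl}\,\Phi(v) = 0,
\end{equation*}
which is Kummer's equation with $a = -e^{\gl}$, $b = \mu$. Its two linearly independent solutions are the confluent hypergeometric functions $M(-e^{\gl}, \mu; v)$ and $\Psi(-e^{\gl}, \mu; v)$.

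The heart of the argument is to eliminate the $M$-component. As $v \to \infty$, $M(-e^{\gl}, \mu; v)$ grows exponentially (like $e^{v}v^{-e^{\gl}-\mu}/\gG(-e^{\gl})$) while $\Psi(-e^{\gl}, \mu; v)$ grows only polynomially (like $v^{e^{\gl}}$). Probabilistically one expects $\phi(v)$ to be polynomial: from $\log V_{k+1} = \log V_k + \log(1 + Y_k/V_k) + \log U_k$ and $\EE\log U_k = -1$, the process $\log V_k$ drifts down by one per step when $V_k$ is large, so both $\C{N}_v$ and $F_v$ are of order $\log v$. A short a priori bound of this form (read off from the integral equation or from a direct probabilistic estimate on $\C{N}_v$) forces $\Phi(v) = c\,\Psi(-e^{\gl}, \mu; v)$; the normalization $\Phi(1) = 1$ then gives $c = 1/\Psi(-e^{\gl}, \mu; 1)$. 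Combining with the derivative rule $\frac{d}{dv}\Psi(a, b; v) = -a\,\Psi(a+1, b+1; v)$ in $\phi(v) = v^{\mu}\Phi'(v)$ and evaluating at $v = 1$ produces the stated formula.

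It remains to address the subsidiary claims. Convexity of the set $\{(\gl, \mu) : G(\gl, \mu) < \infty\}$ is the standard H\"older-inequality property of moment generating functions, and openness of its interior follows from the analyticity of the explicit formula wherever the denominator does not vanish. Finiteness at $(0,0)$ is immediate from $G(0,0) = 1$, which is also verified from the formula using $\Psi(0, b; z) = 1$ and $\Psi(-1, 0; z) = z$. Finally, $\EE(\C{N}) = \partial_{\gl}\log G|_{(0,0)} = 2$ follows by differentiating the closed-form expression with respect to $\gl$ at the origin, using standard identities for the partial derivatives of $\Psi$ in its first two arguments. The main obstacle, as flagged above, is making rigorous the polynomial a priori bound on $\phi(v)$ that is needed to discard $M$; the remaining steps are computational bookkeeping.
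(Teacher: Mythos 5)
Your reparametrization $V_k := z_k e^{-t_k}$ is a clean move: it converts the two quantities $(z_k, t_k)$ into a single-variable Markov chain with update $V_{k+1} = (V_k + Y_k)U_k$, and the additive recursion $F = F' + T_1$ lets you avoid ever tracking the absolute scale of $z_k$. I verified your one-step integral equation, the derivative relation $\phi(v) = v^{\mu}\Phi'(v)$, the resulting Kummer equation $v\Phi'' + (\mu - v)\Phi' + e^{\gl}\Phi = 0$, and the evaluation $\phi(1) = e^{\gl}\Psi(1-e^{\gl}, 1+\mu;1)/\Psi(-e^{\gl}, \mu;1)$; all correct, and they produce exactly the stated formula. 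This is genuinely different from the paper's route, which works with the two coordinates $(s_k, x_k) := (|y_k|/\eta_k, |y_k|)$, derives a first-order PDE for an auxiliary function $H(s,x)$, and only then reduces to an ODE for $G(s) := H(s,1)$ via scaling. (In fact your $V_k$ equals the paper's $s_{k-1}$, so the two chains are the same object; what you gain is that you never need the second coordinate or the PDE.) You also land directly on the moment generating function with the sign conventions of the theorem, whereas the paper first computes the Laplace transform $\EE(e^{-\gl\C{N} - \mu F})$ for $\gl, \mu \ge 0$ and substitutes at the end.

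The two points you flag as gaps are exactly where the paper does careful work, and you should not treat them as minor bookkeeping. First, the a priori bound that eliminates the $M$-solution: your probabilistic heuristic (``$\log V_k$ drifts down by one per step, so $\C{N}_v, F_v = O(\log v)$'') controls the typical size of $\C{N}_v$ but not its moment generating function, which is a tail statement; for $\gl > 0$ this is not a polynomial bound. The paper sidesteps this entirely by first restricting to $\gl, \mu \le 0$, where $\phi(v) \le 1$ trivially and hence $\Phi(w) = 1 + \int_1^w s^{-\mu}\phi(s)\,ds$ is obviously polynomial in $w$; you should adopt the same restriction in your ODE step and then extend. Second, the extension from $\gl, \mu \le 0$ to the full domain $D_K := \{(\gl,\mu) : K(\gl,\mu) < \infty\}$, and the openness of $D_K$, are not immediate from ``analyticity wherever the denominator does not vanish.'' One must show that $\partial D_K$ coincides with the zero set of $\Psi(-e^{\gl}, \mu;1)$ nearest the origin and that $K = \infty$ there; the paper does this (Lemmas on $z_0$ and the analytic extension) via the positivity of the power-series coefficients of $\EE(z^{\C{N}})$ and a sign analysis of $\Psi(1-z,2;1)$ on $[0,2]$ using the integral representation, together with convexity from H\"older. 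That argument, or an equivalent one, is needed to finish the theorem; as written your sketch asserts the conclusion without it.
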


The main ingredient in the proof of the above results is a 
new way to follow the evolution of $x_B$, using excursion theory.
This point of view has also been useful in the study of large 
deviations for the family of paths $\{\gep x_B( \, \cdot\, 
):\gep>0\}$ as $\gep\to0$ (see \citet{CV}). Two other ways of 
studying $x_B$ have been exhibited in \citet{ZE} and \citet{DMF}.

Theorems \ref{fluctuations}, \ref{ClusterForm} and \ref{MGFProposition} are proven in Sections \ref{ProofCLT}, \ref{structuresection} and \ref{GFComputation} respectively. 
An alternative proof of Corollary \ref{frequency} is given in Section \ref{elementaryProof}, while Section \ref{expComputation} gives an elementary computation of a certain expectation which makes the proof of Corollary \ref{frequency} independent of Theorem \ref{MGFProposition}.

\section{Description of the process $\xi$. Proof of Theorem \ref{ClusterForm}}
\label{structuresection}

In this section, we study how the process $x_B$ evolves, and justify the
description of the structure of the process $\xi$ given in Section
\ref{structureSSection}, thus proving Theorem \ref{ClusterForm}. 

We will use elements of excursion theory, for which we refer the reader to
\citet{BE}, Chapter IV. \
For ease in exposition, when working with the excursions of a real valued process $(Y_t)_{t\ge0}$ away from 0,
 by the term ``actual domain'' of an excursion $\gep$ we will mean the interval $[c, d]$ in the domain of $Y$
 where the excursion happens and not $[0, d-c]$ or $[0, \infty)$, which are the two common conventions for the domain of $\gep$ in the literature (\cite{BE} adopts the first). Also we will abuse notation (notice the conflict with
\eqref{overline} below) and denote by $\ol{\gep}$, the \textbf{height} of
$\gep$, that is, the supremum of $\gep$ in its domain. 

For any process $(Y_t)_{t\in I}$ defined in an interval $I$ containing 0, we define the processes $\ul{Y}, \ol{Y}$ of the running 
infimum and supremum of $Y$ respectivelly as 
\begin{align}
\ul{Y}_t:=\inf\{Y_s: s
\text{ between $0$ and } t\}, \\
\ol{Y}_t:=\sup\{Y_s: s
\text{ between $0$ and } t\}  \label{overline} 
\end{align}
for all $t\in I$. This notation will be used throughout the paper.

Now let $(B_s)_{s\in\D{R}}$ be a two sided standard
Brownian motion. For $\ell>0$, we define
\begin{equation}
  \begin{aligned}
  \label{Theta}
H_\ell^-:=& \sup \{s<0: B_s=\ell\},\\
H_\ell^+:=& \inf \{s>0: B_s=\ell\}, \\
\Theta_\ell:=&-\min \{B_s: s\in[H_\ell^-, H_\ell^+]\}.
  \end{aligned}
\end{equation}
Following the path $B|[H_\ell^-, H_\ell^+]$ as $\ell$ increases reveals the
consecutive values of $(x_B(h))_{h>0}$  
in the same
order that the diffusion typically discovers them. Adopting this view, leads us
to
consider the processes  $\{e_t^+: t\ge0\}$ and $\{e_t^-: t\ge0\}$  of
excursions away from 0 of $(\ol{B}_s-B_s)_{s\ge0}$
and
$(\ol{B}_s-B_s)_{s\le 0}$ respectivelly. Both processes are
parametrized by the inverse of the local time processes $(\ol{B}_s)_{s\ge0}$ and
$(\ol{B}_s)_{s\le0}$ respectivelly, and of course they are independent and
identically distributed.

The continuity of $B$ implies that $\Theta$ is piecewise constant,
left continuous, and the set of points where it jumps, call it $\C{L}$, has 0 as
only accumulation
point. 
 \begin{figure}[htbp] 
 \begin{center}
 \resizebox{10cm}{!} {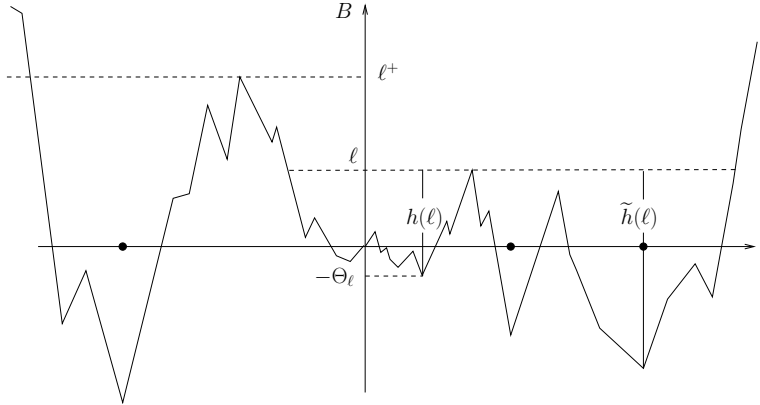} \caption{Following the evolution of
$x_B$. The dots mark three consecutive values of $x_B$. $\Theta$
jumps at the values $\ell, \ell^+$.}
 \label{fig2}
 \end{center}
 \end{figure}

\nid Pick $\ell\in\C{L}$. With probability 1, exactly one of 
$(\ol{B}_s-B_s)_{s\ge0}$, $(\ol{B}_s-B_s)_{s\le 0}$ has at the value $\ell$ of
its local time a nontrivial excursion, call it $\gep$, and moreover that
excursion makes the graph of $B$ go deeper than $-\Theta_\ell$. In Figure
\ref{fig2}, the excursion comes from $(\ol{B}_s-B_s)_{s\ge0}$. Let
$$h(\ell):=\ell+\Theta_\ell,$$ 
call $\widetilde h(\ell)$ the height of $\gep$, and $\ell^+:=\min\{x\in \C{L}:
x>\ell\}.$
$x_B$ jumps at the ``time'' $h(\ell)$, and its value just after $h(\ell)$ is
contained in the ``actual domain'' of the excursion $\gep$. The excursion may contain more
than one value of $x_B$ (e.g., in Figure \ref{fig2} it contains two, marked
with a dot). After we take into account the jumps that happen in moving through
these values, we wait until $\Theta$ jumps again at $\ell^+$ because of a new excursion that goes deeper. 

\subsection{The underlying renewal}  \label{CenterProcess}

We will now examine the distribution of the points $\{h(\ell): \ell\in \C{L}\}$.
Fix $\ell\in\C{L}$. 
For simplicity, we will denote $h(\ell), \widetilde h(\ell), h(\ell^+)$ by $h,
\widetilde h, h^+$ respectively.  
\begin{lemma}\label{overshoots} \begin{enumerate}[(i)] 
\item The random variables $\widetilde h/h, h^+/\widetilde h$ are independent
of each other and of $B \vert [H^-_{\ell},
H^+_{\ell}]$, and have density $x^{-2} 1_{x\ge1}$ and $2
x^{-3}1_{x\ge1}$ respectivelly.

\item  $$\log h^+-\log \widetilde h, \log \widetilde h-\log h$$ have exponential
distribution with means 1/2 and 1  respectively. 
\end{enumerate}
\end{lemma}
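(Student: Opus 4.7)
The plan is to encode $\Theta$ through the two sided excursion point process of $B$ and to identify the triple $(h,\wt h,h^+)$ as features of the record chain of a simple functional on that point process. Let $\Pi$ be the union of the excursion point processes of $(\ol{B}_s-B_s)_{s\ge0}$ and $(\ol{B}_s-B_s)_{s\le0}$, each parametrized by its local time at $0$, which by L\'evy's theorem coincides with the running supremum $\ol{B}$. To each excursion $\gep\in\Pi$ attach the pair $(m_\gep,\ol{\gep})$ consisting of the value of $\ol{B}$ at the start of $\gep$ (on the appropriate side) and the height of $\gep$. Using $n(\ol{\gep}>x)=1/x$ for the standard It\^o excursion measure, together with independence of the two sides, the pairs $(m,x)=(m_\gep,\ol{\gep})$ form a Poisson point process on $[0,\infty)\times(0,\infty)$ of intensity $\nu(dm\,dx)=2\,dm\,dx/x^{2}$.

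Under this encoding, the process $B$ reaches $m-x$ during the excursion at $(m,x)$, so $\Theta_\ell=\sup\{x-m:(m,x)\in\Pi,\,m\le\ell\}$ and $\C{L}$ is the set of levels at which this supremum is renewed. For $\ell\in\C{L}$ the responsible atom has $m=\ell$ and height $\wt h(\ell)>h(\ell)$. Crucially, $B\vert[H^-_\ell,H^+_\ell]$ is measurable with respect to $\Pi\vert_{\{m<\ell\}}$ together with the shapes of those past excursions, whereas $\wt h$ is the height of the atom at $m=\ell$ (belonging to an excursion that continues past $H^+_\ell$ or precedes $H^-_\ell$) and $h^+=\ell^++\Theta_{\ell^+}=\wt h+(\ell^+-\ell)$ depends only on this atom and on atoms at $\{m>\ell\}$; so $(\wt h,h^+)$ is already seen to be independent of $B\vert[H^-_\ell,H^+_\ell]$.

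Next I would compute the conditional law of $(\wt h,h^+)$ given $h$. By Slivnyak's theorem applied to $\Pi$ at a record atom with $m=\ell$ whose height exceeds $h$, the conditional density of $\wt h$ is $h/x^{2}$ on $(h,\infty)$, so $\wt h/h$ has density $y^{-2}\mathbf{1}_{y\ge1}$. Given $\wt h=x$, the next record is set by the first atom $(m',x')\in\Pi$ with $m'>\ell$ and $x'>m'+(x-\ell)$, and a standard Poisson computation gives
\[
\PP(\ell^+-\ell>s\mid\wt h=x)=\exp\!\left(-\int_0^{s}\frac{2\,dr}{x+r}\right)=\left(\frac{x}{x+s}\right)^{2}.
\]
Writing $h^+/\wt h=1+(\ell^+-\ell)/x$, a change of variables gives $h^+/\wt h$ the density $2z^{-3}\mathbf{1}_{z\ge1}$, independent of $x$. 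Hence $\wt h/h$ and $h^+/\wt h$ are independent of each other and of the past, which proves (i); part (ii) is then the immediate change of variables $u=\log y$, $v=\log z$, yielding Exponential($1$) and Exponential($2$) with means $1$ and $1/2$.

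The main delicate point will be the Palm-type conditioning on the measure zero event $\ell\in\C{L}$. The cleanest formulation is to enumerate $\C{L}=\{\ell_{1}<\ell_{2}<\cdots\}$ and describe directly, from the Poisson property, the law of the sequence of record atoms of $\Pi$ for the functional $(m,x)\mapsto x-m$: at each step there is a current record $h_{k}=\ell_{k}+\Theta_{\ell_{k}}$, the next record atom has height with density $h_{k}/x^{2}$ on $(h_{k},\infty)$, and its $m$-coordinate is advanced by an independent waiting time with the law computed above. This bookkeeping is routine, but stating it carefully is what turns the informal PPP picture into a rigorous proof.
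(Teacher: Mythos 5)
Your proposal is correct, and it reaches the same distributions from the same underlying facts---the excursion tail $n(\ol{\gep}>x)=1/x$ and the independence of the two sides of $B$---but reformulated as a marked Poisson point process with intensity $2\,dm\,dx/x^{2}$ and Palm/Slivnyak conditioning. The paper argues more concretely: it fixes $\gd>0$, treats the first record $\ell=\min(\C{L}\cap[\gd,\infty))$ by introducing the stopping times $\tau^{\pm},\rho^{\pm}$ and the levels $M^{\pm},J^{\pm}$, obtains $\PP\bigl((M^{+}-\ul{B}_{\rho^{+}})/J^{+}\geq x\mid J^{+}\bigr)=1/x$ by gambler's ruin and the analogous squared gambler's-ruin estimate for $h^{+}/\wt h$, then inducts over $\C{L}\cap[\gd,\infty)$ and lets $\gd\to0$. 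Your Poisson integral $\exp\bigl(-\int_{0}^{s}2\,dr/(x+r)\bigr)$ for $\ell^{+}-\ell$ is exactly the PPP counterpart of that squared gambler's-ruin probability, and the PPP picture makes the record structure and the scale-invariance especially transparent; what the paper's route buys is that it never needs to condition on a null event, so it is rigorous as written. When you make your version precise, two things need repair: first, the sentence asserting that ``$(\wt h,h^{+})$ is already seen to be independent of $B\vert[H^{-}_{\ell},H^{+}_{\ell}]$'' is not right as stated---$\wt h>h$ and $h$ is a function of the past, so only the ratios $\wt h/h$ and $h^{+}/\wt h$ are independent of the past, which is in fact what your subsequent computation establishes; second, $\C{L}$ accumulates at $0$, so it cannot be enumerated as $\{\ell_{1}<\ell_{2}<\cdots\}$ from a smallest element---one must, as the paper does, enumerate $\C{L}\cap[\gd,\infty)$ for fixed $\gd>0$ (or give a Palm-measure formulation of the record chain) and then let $\gd\to0$.
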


\begin{proof}

(i) Pick $\gd>0$ arbitrary. First, we prove the claim for $\ell$ being
the smallest
element of $\C{L}\cap[\gd,
\infty)$. Let (see Figure \ref{fig2b})
\begin{figure}[htbp] 
 \begin{center}
 \resizebox{10cm}{!} {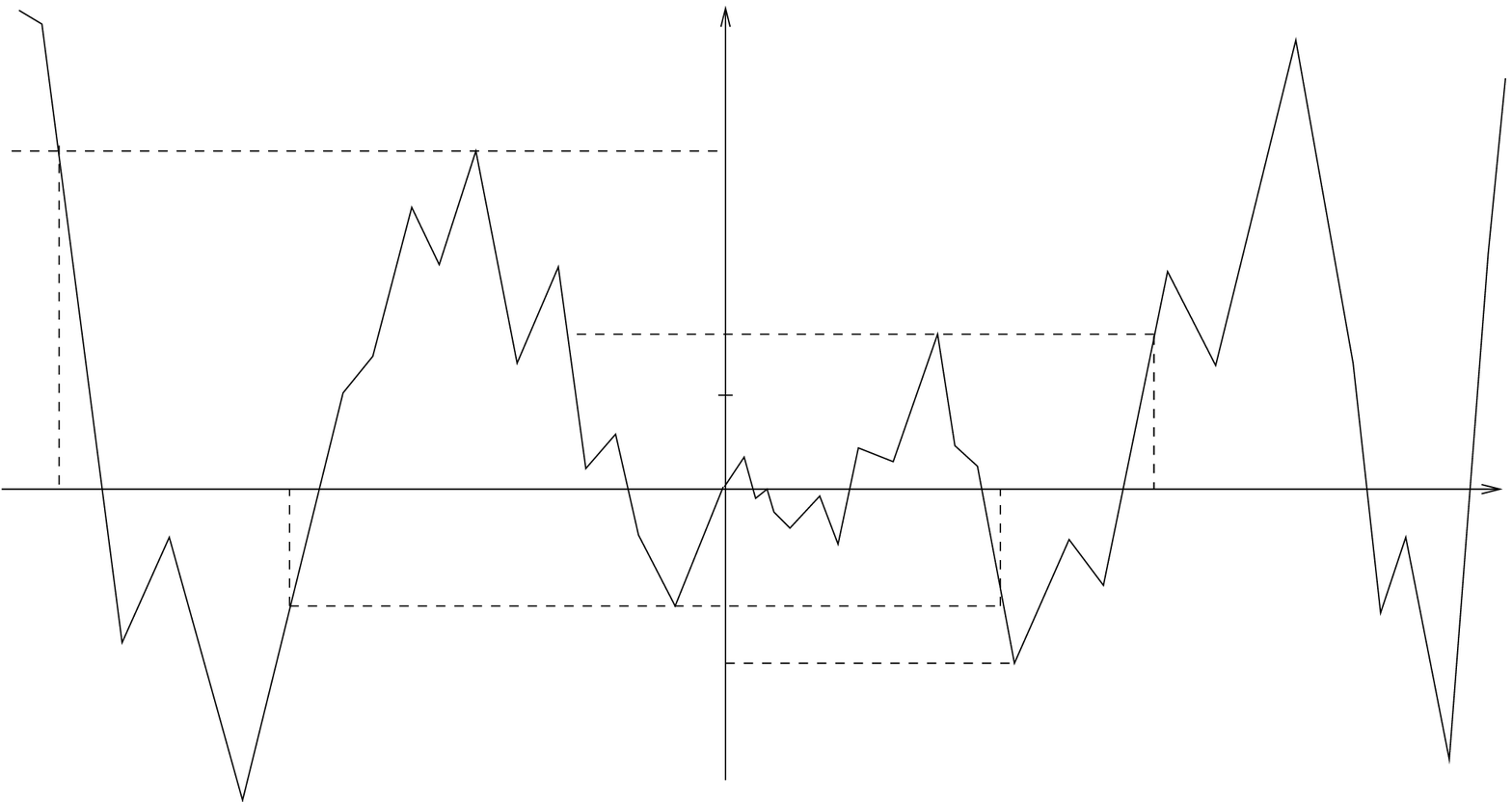} \caption{}
 \label{fig2b}
 \end{center}
 \end{figure}
\begin{align*}
\tau^+&:=\inf\{s\ge H_\gd^+: B_s=-\Theta_\gd\},\\ 
M^+&:=\ol{B}_{\tau^+},\\
\rho^+&:=\inf\{s>\tau^+: B_s=M^+\},\\
J^+&:=M^++\Theta_\gd,
\intertext{and similarly on the negative semiaxis,}
\tau^-&:=\sup\{s\le H_\gd^-: B_s=-\Theta_\gd\},\\ 
M^-&:=\ol{B}_{\tau^-},\\
\rho^-&:=\sup\{s<\tau^-: B_s=M^-\},\\
J^-&:=M^-+\Theta_\gd.
\end{align*}
Then $\ell=M^-\wedge M^+$ and 
\begin{equation} \label{tildehh}
\frac{\widetilde h}{h}=
\begin{cases}
(M^+-\ul{B}_{\rho^+})/J^+ & \text{ if } M^-\ge M^+,  \\
(M^--\ul{B}_{\rho^-})/J^- & \text{ if } M^-<M^+.
\end{cases}
\end{equation}
For $x\ge 1$, we compute 
\begin{align*}\PP\left(\frac{M^+-\ul{B}_{\rho^+}}{J^+} \ge x \, \Big| \,
J^+\right)&=\PP(\text{Brownian Motion starting from $-\Theta_\gd$ hits
$M^+-xJ^+$ before
} M^+ \, | \, J^+)\\&=\frac{M^++\Theta_\gd}{xJ^+}=\frac{1}{x}.
\end{align*}
Since $\tau^+$ is a stopping time, $\{B_{\tau^++s}-B_{\tau^+}:s\ge0\}$ is
independent of $B| [\tau^-, \tau^+]$. Thus, given $J^+$,
$(M^+-\ul{B}_{\rho^+})/J^+$ is
independent of $B| [\tau^-, \tau^+]$, and the previous computation shows that it
is independent of $J^+$ as well and has density $x^{-2}1_{x\ge1}$. Thus,
$(M^+-\ul{B}_{\rho^+})/J^+$
is
independent of $B| [\tau^-, \tau^+]$. Similarly  $(M^--\ul{B}_{\rho^-})/J^-$ has the same density, $x^{-2}1_{x\ge1}$, and is independent of $B|[\tau^-,
\tau^+]$. Since the event $M^-\ge M^+$ is in the $\sigma$-algebra generated by  $B|[\tau^-,
\tau^+]$, these observations combined with \eqref{tildehh} imply the claim of the lemma for $\wt h/h$.

We turn now to $h^+/\wt h$. Let
\begin{align*}
\hat \tau^+&:=\inf\{s\ge H_{\ell+}^+: B_s=-\Theta_{\ell+}\},\\ 
\hat M^+&:=\ol{B}_{\hat \tau^+},\\
\hat \tau^-&:=\sup\{s\le H_{\ell+}^-: B_s=-\Theta_{\ell+}\},\\ 
\hat M^-&:=\ol{B}_{\hat \tau^-}.
\end{align*}
Here $\Theta_{\ell+}$ denotes the limit of $\Theta$ at $\ell$ from the right,
and the same remark applies to $H_{\ell+}^-, H_{\ell+}^+$.
Then $\ell^+=\hat M^-\wedge \hat
M^+, \wt h=\ell+\Theta_{\ell+}, h^+=\ell^++\Theta_{\ell+}$. So that for $x\ge
1$,
$$\PP(h^+>x \wt h \, | \, \wt h)=\PP(\text{Brownian
starting from $\ell$ hits
$x \wt h-\Theta_{\ell+}$
before}-\Theta_{\ell+}\, | \, \wt
h)^2=\left(\frac{\ell+\Theta_{\ell+}}{x\wt
h}\right)^2=\frac{1}{x^2}.$$
The strong Markov property implies that, given $\wt h$, $h^+$ is independent of
$B|[H^-_{\ell+}, H^+_{\ell+}]$, and the above computation shows that $h^+/\wt h$
is indepdendent of $B|[H^-_{\ell+}, H^+_{\ell+}]$. Note that $\wt h/h$ is
determined by $B|[H^-_{\ell+}, H^+_{\ell+}]$. Thus the claim about $h^+/\wt h$
is proved.

Having proved the result for $\ell:=\min\{\C{L} \cap[\gd, \infty)\}$, we can
prove it similarly for $\ell^+$ by repeating the above procedure with the role
of $\gd$ played now by $\ell^+$. Doing the appropriate induction, we get the
result for all elements of $\C{L}\cap[\gd, \infty)$. But $\gd$ was
arbitrary, so the claim is true for all $\ell\in\C{L}$. 

\medskip

\nid (ii) It is an immediate consequence of part (i).
\end{proof}

Lemma \ref{overshoots} shows that $\{\wt h(\ell)/h(\ell),
h^+(\ell)/\wt h(\ell):\ell\in \C{L}\}$ are all independent
because for given $\ell\in\C{L}$, the ones with index strictly less than $\ell$
are functions of $B \vert [H^-_{\ell}, H^+_{\ell}]$, 
while $\wt h(\ell)/h(\ell), h^+(\ell)/\wt h(\ell)$ are independent of that path
and of each other. Also their distribution is known. Thus
$$\{\log h(\ell^+)-\log h(\ell): \ell\in \C{L}\}$$
are i.i.d. each with law the same as $W_1+W_2$, with $W_1\sim$  Exponential(1),
$W_2\sim$  Exponential(2) independent. Let
$$\psi:=\{\log h(\ell): \ell\in\C{L}\}.$$
For a given $a>0$, scaling invariance of Brownian motion implies that
$\{h(\ell):\ell\in\C{L}\}\overset{d}{=}\{a h(\ell): \ell\in \C{L}\}$. Combining
these observations, we have that $\psi$ is a stationary renewal process with
interarrival times distributed as $W_1+W_2$ mentioned above.

$x_B$ jumps at each point of $h(\C{L})$, thus $\psi\subset \xi$. In fact, the
inclusion is strict, and the points in $\xi\setminus \psi$ are the subject of
the next subsection.    

\subsection{Jumps inside an excursion. Distribution of the clusters}
\label{InsideAnExcursion}

Now we
examine the behavior of $x_B$ in each interval $[h(\ell), h(\ell^+)]$, where
$\ell\in\C{L}$. Again, we abbreviate $h(\ell), h(\ell^+)$ to $h, h^+$. Assume
that the jump at $\ell$ is caused by an excursion, $\gep$, of
$(\ol{B}_s-B_s)_{s\ge0}$. This excursion is simply 
$(B_{H_\ell^+}-B_{H_\ell^++s}: 0\le s\le
H_{\ell+}^+-{H_\ell^+})$ and contains all the information
on the jumps of $x_B$ in $[h, h^+)$.

\medskip

\textsc{Claim}: Given $h$, the excursion $\gep$ has law $n(\,\cdot\,\,
| \,
\ol{\gep}\ge h)$.

\medskip

Recall the excursion processes $\{e_t^+: t\ge0\}$ and $\{e_t^-: t\ge0\}$
introduced just after relation \eqref{Theta}. They are independent and
identically distributed, and we call $n$ their characteristic measure.
We prove the claim for $\ell:=\inf(\C{L}\cap[\gd, \infty))$, where $\gd>0$ is
arbitrary. An argument similar with the one used in the proof of Lemma
\ref{overshoots} gives the result for any $\ell\in\C{L}$. 

With probability
1, $\C{L}$ does not contain $\gd$. Let 
\begin{align*}
\tau^-&=\inf\{t\ge \gd:\ol{e_t^-}>t+\Theta_\gd\},\\
\tau^+&=\inf\{t\ge \gd:\ol{e_t^+}>t+\Theta_\gd\}.
\end{align*}
Then $\ell=\tau^-\wedge \tau^+$, and 
\begin{equation} \label{NewExcursion}
 \gep=\begin{cases}
  e^+_{\tau^+}  &\text{ if }   \tau^-\ge\tau^+,\\ 
e^-_{\tau^-}  &\text{ if }   \tau^-<\tau^+. 
     \end{cases}
\end{equation}

 The process $t\mapsto (t,
e_t^+)$ is a
Poisson point process with characteristic measure $\gl \times  n$ ($\gl$ is
Lebesgue measure), and $\tau^+$
is the first entrance time of this process  in the set $A:=\{(s, \gep): s\ge\gd,
\ol{\gep}>s+\Theta_\gd\}$. The law of the pair $(\tau^+,
e^+_{\tau^+})$ is
that
of  $\gl \times  n(\, \cdot \, | \, (s, \gep)\in A)$, and given that
$\tau^++\Theta_\gd=h$, the law of $e^+_{\tau^+}$ is
independent of $\tau^+$ and  equals 
$n(\,\cdot\,\,|
\,
\ol{\gep}>h)$, which the same as $n(\,\cdot\,\, | \,
\ol{\gep}\ge h)$. The analogous assertion holds for the pair
$(\tau^-, e^-_{\tau^-})$, which is independent of $(\tau^+,
e^+_{\tau^+})$. These observations together with
\eqref{NewExcursion} imply the  claim.

\medskip

We pause for a moment to define for any excursion $\gep_0$ of $\ol{B}-B$ and
$a>0$, a positive integer $\C{N}(\gep_0, a)$. 

\nid Assume that $\gep_0$ has domain $[0, \gz]$ and height $\widetilde h:=\ol{\gep_0}>0$.
We consider the path $\gga=-\gep_0$, see Figure \ref{fig3}. To the process
$(\gga_s-\ul{\gga}_s)_{s\in [0,\gz]}$ corresponds the process 
$(\gep_r)_{r\in[0,\widetilde h]}$ of its excursions away from zero. This is parametrized by the inverse of the local time process defined by
the absolute value of the running minimum (i.e., -$\ul{\gga}_s$).
Since $\gep_0$ is continuous defined on a compact interval, the subset of
excursions with height $\ge a$ constitutes a finite, possibly empty, set
$(\gep_{r_i})_{1\le i\le K}$, with
$(r_i)$ increasing. We define recursively a finite sequence $j$ as
follows (see Figure \ref{fig3}).
\begin{align*}
 j_0:&=0\\
 j_1:&=\min\{i\le K: \ol{\gep_{r_i}} >a\} \text{ \ \ \ \ if the set is
nonempty, }  \\
j_{k+1}:&=\min\{i\le K: \ol{\gep_{r_i}} > \ol{\gep_{r_{j_k}}}\}
\text{\ \ \ if the set is nonempty and }k\ge 1.
\end{align*}
 If any of the sets involved in the definition
is empty, the corresponding $j_k$ is not defined, and the
recursive definition stops. Let
$$\C{N}(\gep_0, a):=
\begin{cases}\max\{k:j_k \text{ is defined}\}+1 & \text{ if } \ol{\gep}\ge
a,\\
0 &\text{ if } \ol{\gep}<a.
\end{cases}
$$
Recalling the definition of $x_B$, we can say informally that $\C{N}(\gep_0, a)$
counts the number of jumps caused in $x_B$ by $\gep_0$ with starting benchmark
$a$. 

\begin{figure}[htbp] 
 \begin{center}
 \resizebox{9cm}{!} {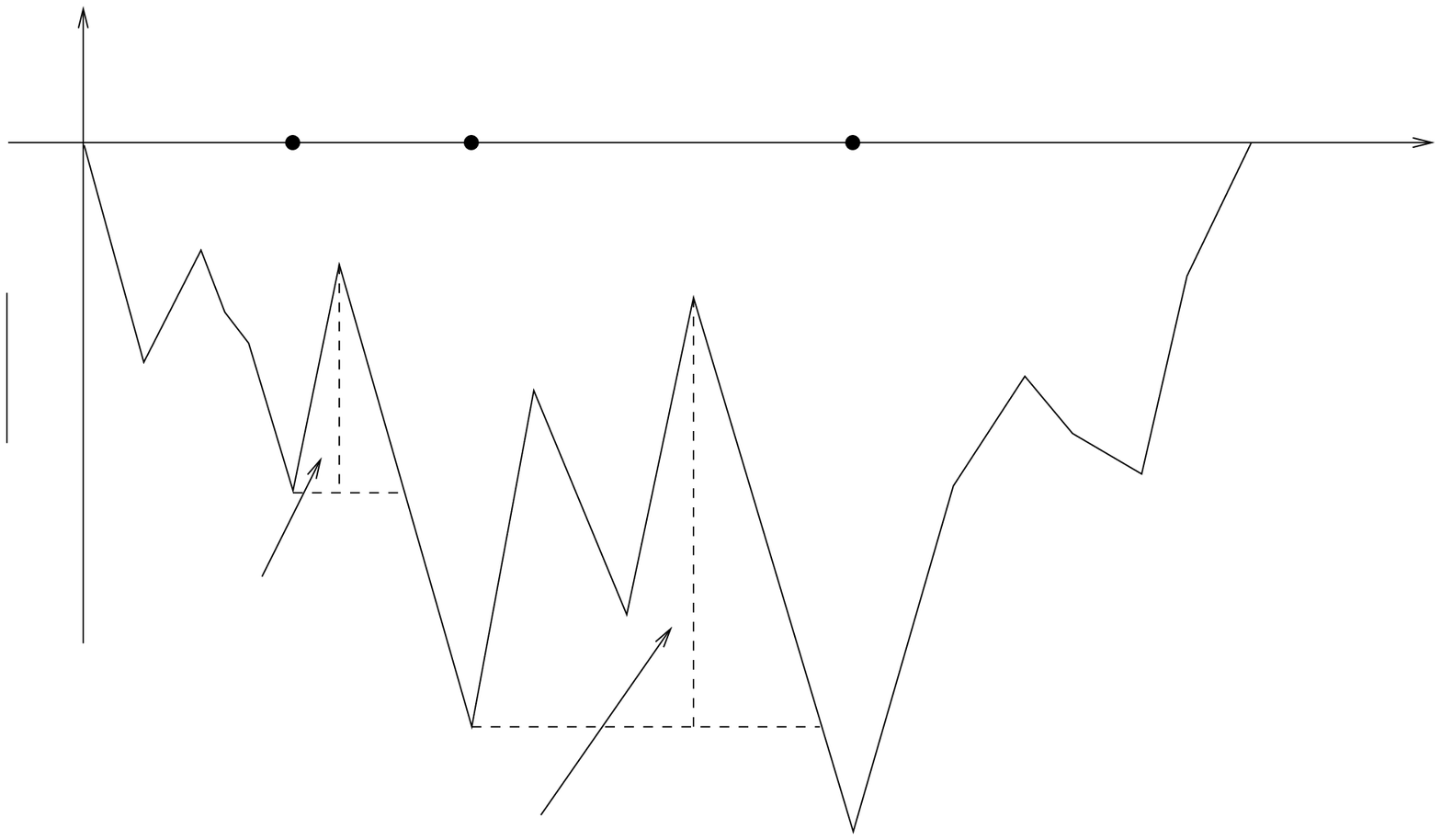} \caption{The graph of $-\gep_0$. For
this path, only $j_0, j_1, j_2$ are defined, thus $\C{N}(\gep_0, a)=3$. The
three points on the $x$-axis mark the values of $x_B$ after $x_B(a)$ that are
contained in the ``actual domain'' of the excursion.}
 \label{fig3}
 \end{center}
 \end{figure}

Thus $\C{N}(\gep, h)$ counts the jumps of $x_B$ in $[h, h^+)$, and note that
$\C{N}(\gep, h)\ge 1$ because of the jump at $h$, while in the interval 
$[\widetilde h, h^+)$ there are no jumps.  
The excursions $\gep_{r_{j_k}}$ in the definition of $\C{N}(\gep, h)$ give rise
to the jumps in
$(h, \widetilde h)$. 
And in fact, if we let $\nu:=\C{N}(\gep, h)$, the jumps happen exactly at the
points
$$\ol{\gep_{r_{j_1}}}<\ldots<\ol{\gep_{r_{j_{\nu-1}}}},$$
assuming that $\nu>1$. Otherwise, there are no jumps in $(h, \widetilde h)$.

We will determine the law of these points given the value of $h$.

\nid Let $a=h$ .
The law of $-\gep$ is described as follows  (see \citet{RY}, Chapter XII,
Theorem
4.1). It starts from zero as
the negative of a three dimensional Bessel process until it hits
$-a$. After that, it continues as Brownian motion until hitting 0. Thus, let $\eta_0:=a, y_0=-a,$ and take $W$ a Brownian motion starting from $y_0$. Then let 
\begin{align*}
\tau_0&:=\min\{s>0: W_s-\ul{W}_s=\eta_0\}, \\
y_1&:=\ul{W}_{\tau_0},\\
\gs_1&:=\min\{s>\tau_0:W_s=y_1\},\\
\eta_1&:=\ol{W}_{\gs_1}-y_1.
\end{align*}
By well known property of Brownian motion, it holds $\eta_1>\eta_0$. Repeat the above, with the role of $\eta_0, y_0$ played by $\eta_1,
y_1$, and define $\tau_1, y_2, \gs_2, \eta_2$. Continue recursively. Then $\nu$
is the largest integer $i$ for which $\eta_{i-1}\le |y_{i-1}|$, while 
$$\ol{\gep_{r_{j_1}}}=\eta_1, \ol{\gep_{r_{j_2}}}=\eta_2, \ldots,
\ol{\gep_{r_{j_{\nu-1}}}}=\eta_{\nu-1},$$
and $-y_\nu=\widetilde h$, which is the height of the excursion.

We remark that given $y_k$ and $\eta_k$, the random variables $y_{k+1},
\eta_{k+1}$
are independent of $W \vert [0, \gs_k]$ because by the strong Markov property,
$W^{(k)}_s:=W_{\gs_k+s}-y_k$ is a standard Brownian motion independent of $W
\vert [0, \gs_k]$, and $y_{k+1}, \eta_{k+1}$ are functions of the path
$W^{(k)}$ and of $y_k, \eta_k$. The dependence on $y_k, \eta_k$ is removed if
we consider
$$\frac{y_k-y_{k+1}}{\eta_k}=:\ga_k,
\,\frac{\eta_{k+1}-\eta_k}{\eta_k}=:\gb_k.$$
\textsc{Claim}: The random variables $\ga_k, \gb_k$ are independent of $W \vert [0, \gs_k]$, independent of each
other, and have
densities $e^{-x} \ 1_{x>0}, (1+x)^{-2}\ 1_{x>0}$ respectively.

Indeed, consider the excursion process, for the excursions away from zero, of the reflected from the past minimum
process
$W^{(k)}-\ul{W}^{(k)}$ parametrized by the inverse of the local time process 
$L_s:=|\ul{W}^{(k)}_s|$. $y_k-y_{k+1}$ is the value of the local time when the
first excursion with height at least $\eta_k$ appears, while $\eta_{k+1}$ is
the height of the excursion. Now Proposition 2 from Chapter 0
of \citet{BE} gives that, conditional on $\eta_k$, $y_k-y_{k+1}$ is an
exponential
random variable with parameter $n(\gep\ge 
\eta_k)=1/\eta_k$, and the excursion is
independent of $y_k-y_{k+1}$ and has law $n(\,\cdot \, | \,
\ol{\gep}\ge \eta_k)$. 
The equality $n(\gep\ge 
\eta_k)=1/\eta_k$ is true by Exercise 2.10 (1), Chapter XII of \citet{RY}, which also implies that $\eta_{k+1}$
has density $\eta_kx^{-2} 1_{x\ge
\eta_k}$. So that the conditional law of $(\ga_k, \gb_k)$
given $\eta_k$ does not depend on $\eta_k$ or $y_k$ and it is a product measure.
Thus
$\ga_k, \gb_k$ do not depend on $\eta_k$ or $y_k$, are independent of each other, and
have the required density. The proof of the claim is concluded by also taking into account the discussion preceding it.
 
The above imply that $\{(\ga_k, \gb_k): k\ge0\}$ are
i.i.d.

\nid Then the random variables
\begin{equation} \label{excursionVariables}\nu, \left(\frac{|y_{k-1}|}{a},\, \log 
\frac{\eta_{k-1}}{a}\right)_{1\le k \le \nu}, \log \frac{|y_\nu|}{a}
\end{equation}
are related in exactly the same way as  
\begin{equation} \label{introductionVariables}
 \C{N}, (z_k, t_k)_{1\le k\le\C{N}}, F,
\end{equation}
defined in Section \ref{structureSSection}. In particular, they don't depend on $a$.  For example 
$$\log  \frac{\eta_{k+1}}{a}-\log  \frac{\eta_k}{a}=\log 
\frac{\eta_{k+1}}{\eta_k}=\log(1+\gb_k)$$
has exponential distribution with mean 1. The correspondence between \eqref{excursionVariables},
\eqref{introductionVariables} proves that $F$ has exponential
distribution with mean 1 because $\log (|y_\nu|/a)=\log(\widetilde h/h)$,
whose distribution was determined in Lemma \ref{overshoots}. 
Thus 
$$\xi\cap[\log h(\ell), \log h(\ell^+))\,\big|\,\ell \in \C{L}
\overset{d}{=}\log h(\ell)+\C{T}\, \big|\, \ell \in \C{L}.$$
Taking into account the structure of the process $\psi:=\{\log h(\ell):\ell\in
\C{L}\}$ given at the end of Section \ref{CenterProcess}, we get Theorem
\ref{ClusterForm}.

\section{Jumps inside an excursion. Proof of Theorem \ref{MGFProposition} }
\label{GFComputation}

For $\gl, \mu\in\D{R}$, let
\begin{equation}\label{MGFPair} K(\gl, \mu):=\EE(e^{\gl \C{N}+\mu F,
})\end{equation}
the moment generating function of $(\C{N}, F)$. The aim of this section is to
compute $K$ explicitly for all $\gl, \mu$ for which it is finite. First we
compute it for negative $\gl, \mu$, and then we use analytic extension.

\subsection{The Laplace transform} \label{LaplaceSubsection}

Recall that we denote by $\Psi$ the
confluent hypergeometric function of the second kind.
\begin{proposition}\label{LaplTrans}
It holds
\begin{equation}\label{LaplTransEqn} \EE(e^{-\gl \C{N}-\mu F })=
e^{-\gl}\frac{\Psi(1-e^{-\gl},
1-\mu;1)}{\Psi(-e^{-\gl}, -\mu;1)}
\end{equation}
for all $\gl, \mu\ge0$ .
\end{proposition}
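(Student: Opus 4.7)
The natural approach is to reduce the problem to an ODE boundary-value problem, viewing the Laplace transform as a function of an auxiliary starting state of the Markov chain driving $\C{N}$ and $F$. First, I would pass to the normalized chain $u_k := z_k e^{-t_k}$: by construction $u_1 = 1$, and the recursion takes the tractable form $u_{k+1} = (u_k + Y_k)e^{-E_k}$, where $E_k := t_{k+1} - t_k \sim \text{Exponential}(1)$ is independent of $Y_k \sim \text{Exponential}(1)$. The stopping rule $\{t_k \le \log z_k\}$ is exactly $\{u_k \ge 1\}$, and the telescoping identity $F = \sum_{k=1}^{\C{N}} E_k + \log u_{\C{N}+1}$ makes $e^{-\mu F}$ factor multiplicatively along the chain.

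Next, introduce $\phi(u) := \EE[e^{-\gl \C{N} - \mu F}]$ for a chain started at $u_1 = u$, extended to $u \in (0,1)$ by $\phi(u) := u^{-\mu}$ (the value obtained when the chain aborts immediately: $\C{N} = 0$, $F = \log u$). A first-step analysis in a state $u \ge 1$ gives $\phi(u) = e^{-\gl}\EE[e^{-\mu E_1}\phi((u+Y_1)e^{-E_1})]$, and the substitutions $v = u + Y_1$, $w = v e^{-E_1}$, splitting the range of $w$ at $1$, reduce this to the integral equation
\[
e^{-u}\phi(u) = e^{-\gl}\int_u^\infty e^{-v}\, v^{-\mu-1}\, G(v)\,dv, \qquad G(u) := 1 + \int_1^u w^\mu \phi(w)\,dw.
\]
Differentiating in $u$ and using $G'(u) = u^\mu \phi(u)$ to eliminate $\phi$ produces
\[
u G''(u) - (u + \mu)\, G'(u) + e^{-\gl}\, G(u) = 0,
\]
which is Kummer's confluent hypergeometric equation with parameters $a = -e^{-\gl}$, $b = -\mu$.

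Its two linearly independent solutions are $M(a,b;\cdot)$ and $\Psi(a,b;\cdot)$, the former growing like $e^z$ at infinity and the latter satisfying $\Psi(a,b;z) \sim z^{-a}$. Since $\phi \le 1$ on $[1,\infty)$ (because $\C{N} \ge 1$ and $F \ge 0$), $G(u) = O(u^{\mu+1})$; this polynomial growth excludes the $M$-solution, so $G$ must be a constant multiple of $\Psi(-e^{-\gl}, -\mu;\cdot)$, with the constant pinned down by the boundary value $G(1) = 1$. The claim then follows from $\phi(1) = G'(1)$ combined with the derivative identity $\tfrac{d}{dz}\Psi(a,b;z) = -a\,\Psi(a+1, b+1; z)$. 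I expect the main obstacle to be the uniqueness step: one must argue carefully that $G$ really has only polynomial growth at infinity so as to rule out any $M$-contribution, and handle the slightly degenerate Kummer parameters ($b = 0$ when $\mu = 0$, $a = -1$ when $\gl = 0$) that appear at the boundary of the stated parameter region.
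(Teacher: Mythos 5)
Your proposal is correct and follows essentially the same route as the paper: a first-step analysis on the driving Markov chain, reduction to the Kummer equation $uG''-(u+\mu)G'+e^{-\gl}G=0$ with $G(1)=1$, a polynomial growth bound to discard the $\Phi$-branch and retain $\Psi$, and evaluation of $\phi(1)=G'(1)$ via $\partial_z\Psi(a,b;z)=-a\Psi(a+1,b+1;z)$. The paper reaches the same ODE from the two-variable excursion chain $(s_k,x_k)$ and uses Brownian scaling $f(s,x)=x^{-\mu}f(s,1)$ to collapse to one dimension, and it handles the degenerate parameters $\mu\in\D{N}$, $\gl=0$ that you flag by first restricting to $\mu>0$, $\mu\notin\D{N}$, $\gl>0$ and then passing to the boundary by continuity and bounded convergence; your normalization $u_k=z_ke^{-t_k}$ achieves the same one-dimensional reduction directly from the description of the cluster in Section~\ref{structureSSection}.
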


\begin{proof}

Because of the correspondence between \eqref{excursionVariables} and 
\eqref{introductionVariables}, the pair $(\C{N}, F)$ has the same distribution
as $(\C{N}(\gep, 1), \log \ol{\gep})$ where $\gep$ is an excursion
with law $n(\cdot\, | \,
\ol{\gep}\ge 1)$. In the following, we use the notation
set in Section \ref{InsideAnExcursion}, with $h=1$, and in particular the random
variables $\{y_k, \eta_k, \ga_k, \gb_k: k\ge0\}$. Let
\begin{align} (s_k, x_k)&:=\left(\frac{|y_k|}{\eta_k}, |y_k|\right) \text{ and
} 
\phi_k=(1+\gb_k)^{-1} \\ 
\intertext{for $k\ge0$. Then, $(s_0, x_0)=(1,1)$,}
 \label{CEvolution1}
s_{k+1}&=\frac{x_k+\ga_k \eta_k}{\eta_k
(1+\gb_k)}=(s_k+\ga_k)\phi_k, \\
x_{k+1}&=x_k\left(1+\frac{\ga_k}{s_k}\right), \label{CEvolution2}
\end{align}
for all $k\ge0$, while by the claim in Section \ref{InsideAnExcursion},
$\{\ga_k, \phi_k: k\ge0\}$ are independent, with $\ga_k$ exponential with
mean 1 and $\phi_k$ uniform in $[0,1]$. Also  
\begin{align}\C{N}(\gep, 1)&=\min\{i>1:s_i<1\},\\
\ol{\gep}&=x_{\C{N}(\gep, 1)}.
\end{align}

\nid  Fix $\gl, \mu\ge0$. For given $(s, x)\in [1, \infty)\times (0, \infty)$,  
consider the Markov process $(s_k, x_k)_{k\ge0}$ that has $(s_0, x_0)=(s, x)$ and evolves 
as in \eqref{CEvolution1}, \eqref{CEvolution2}, and define 
\begin{equation}
\begin{aligned}
M&=M(s, x):=\min\{i>1:s_i<1\},\\
f(s,x)&:=\EE_{s_0=s, x_0=x}(e^{-\gl M-\mu\log x_M}\textbf{1}_{M<\infty})=\EE_{s_0=s,
x_0=x}(e^{-\gl M} x_M^{-\mu} \textbf{1}_{M<\infty}).
\end{aligned}
\end{equation}
We will show that $M<\infty$ with probability 1, so that $K(-\gl, -\mu)=f(1,1)$. Thus the plan is to show that $f$ is regular enough, derive a differential equation
involving it, and solve the equation to get in particular the value $f(1,1)$.

Using standard arguments, we can see that $f$ is measurable. Also, it
is nonnegative and bounded by $\gd^{-\mu}$ in each set of the form
$[1, \infty)\times[\gd, \infty)$, with $\gd>0$, because $\gl, \mu,
M\ge0$, and by \eqref{CEvolution2}, $(x_k)_{k\ge0}$ is increasing. 

Brownian scaling gives that
\begin{equation}\label{scaling}
f(s,x)=x^{-\mu}f(s,1).
\end{equation}
For $(s, x)\in [1, \infty)\times (0, \infty)$, define
\begin{equation}
H(s,x):=\int_1^s f(t,x)\, dt+x^{-\mu}.
\end{equation}
\medskip

\nid \textsc{Claim:} It holds
\begin{equation}\label{PDE} s\, \partial_{s,s } H(s,x)+x\, \partial_{s, x} 
H(s,x)-s\, \partial_s H(s,x)+e^{-\gl}H(s,x)=0
\end{equation}
in the interior of
$$\{(s,x): s\ge 1, x>0\},$$
and $H(1,x)=x^{-\mu}$ for $x>0$.

\medskip

\textsc{Proof of the claim:} The equation is derived through first step
analysis. Call $k(dt, dy \vert s, x)$ the transition law of the chain $(s_n,
x_n)_{n\ge1}$. Then using \eqref{CEvolution1}, \eqref{CEvolution2} we have that
\begin{equation}\label{integrEquation} f(s,x)=e^{-\gl}\left(\EE(x_1^{-\mu}
1_{s_1<1})+\int_{A_{s,x}} f(t,y) k(dt, dy \vert s, x)\right),
\end{equation}
with
$$A_{s,x}:=\left\{(t,y): 1\le t \le \frac{s}{x} y, y\ge x\right\}.$$
For fixed $s, x$, the measure $k(dt, dy \vert s, x)$ is supported on $$B_{s,
x}:=\left\{(t,y): 0<t \le \frac{s}{x} y, y\ge x\right\}$$ 
and is derived from a density, which we now determine. The distribution
function of the measure at a $(t, y)\in B_{s, x}$ is 
\begin{equation}
\begin{aligned}F(t,y)&:=\PP\left((s+r) \phi\le t,
x\left(1+\frac{r}{s}\right)\le
y\right)=\PP\left(r\le \left(\frac{y}{x}-1\right)s, \phi\le
\frac{t}{s+r}\right)\\&=\int_0^{(\frac{y}{x}-1)s}e^{-z}
\left(\frac{t}{s+z}\wedge 1\right)\, dz=
\begin{cases}
t\int_0^{(\frac{y}{x}-1)s}\frac{e^{-z}}{z+s}\, dz & 0<t\le s, \\
\int_0^{t-s} e^{-z} \, dz+t\int_{t-s}^{(\frac{y}{x}-1)s}\frac{e^{-z}}{z+s}\, dz
& s<t\le \frac{y}{x} s.
\end{cases}
\end{aligned}
\end{equation}
In the interior of $B_{s, x}$, $\partial_t F(t, y)$ exists and is continuous in
$t$, and $\partial_{y, t} F(t, y)$ exists and is continuous in $y$. Also, the
integral of $\partial_{y, t} F(t, y)$ in $B_{s, x}$ is 1. Thus, the measure
$k(dt, dy \vert s, x)$ has density
$$\frac{\partial^2F}{\partial y \partial t}(t, y)\textbf{1}_{(t, y)\in B_{s, x}}
=\frac{1}{y}e^{-(\frac{y}{x}-1)s} \textbf{1}_{(t, y)\in B_{s, x}}.$$
Let $g(y)=y^{-\mu}$. Then \eqref{integrEquation} becomes
\begin{equation}
f(s,x)=e^{-\gl}\left(\int_x^{\infty} \frac{g(y)}{y}
e^{-(\frac{y}{x}-1)s}
dy+\int_x^\infty\int_1^{\frac{s}{x} y}\frac{1}{y}e^{-(\frac{y}{x}-1)s} f(t,y) \,
dt\, dy\right).
\end{equation}
This, combined with the measurability and boundedness of $f$ in
sets of the form $[1, \infty)\times[\gd, \infty)$, with $\gd>0$,
shows that $f$ is
continuous in $[1, \infty)\times(0, \infty)$ and differentiable in the interior
of the same set. We write the last equation as 
\begin{align*}e^{\gl-s} f(s,x)&=\int_x^{\infty} \frac{g(y)}{y} e^{-\frac{y}{x}s}
dy+\int_x^\infty\int_1^{\frac{s}{x} y}\frac{1}{y}e^{-\frac{y}{x}s} f(t,y) \,
dt\, dy\\
&=\int_s^{\infty} \frac{e^{-w}}{w} g\left(\frac{x}{s} w\right) dw+\int_s^\infty
\frac{e^{-w}}{w}   \int_1^w  f\left(t,
\frac{x}{s} w\right) \, dt\, dw.
\end{align*}
Putting  $x=hs$ we get
$$e^{\gl-s} f(s,h s)=
\int_s^{\infty} \frac{e^{-w}}{w} g\left(h w\right) dw+\int_s^\infty
\frac{e^{-w}}{w}   \int_1^w  f\left(t, h w\right) \, dt\, dw,$$
and differentiating with respect to $s$, 
$$e^{\gl-s}\big(-f(s,h s)+\partial_s f(s,hs)+h\partial_x
f(s,sh)\big)=-\frac{e^{-s}}{s}\, g(hs) -\frac{1}{s}e^{-s} \int_1^{s} f(t,h s) \,
dt.$$
Here $\partial_s, \partial_x$ denote differentiation with respect to the first and second argument respectively. Putting back $h=x/s$, this gives
$$e^{\gl}\big(-f(s,x)+\partial_s f(s,x)+\frac{x}{s}\, \partial_x
f(s,x)\big)+\frac{1}{s}g(x)+\frac{1}{s} \int_1^{s} f(t,x) \, dt=0,$$
which in terms of $H(s,x)$ is written as
$$s \big(-\partial_s H(s,x)+\partial_{s s} H(s,x)\big)+x\partial_{s x} 
H(s,x)+e^{-\gl}H(s,x)=0.$$
This is \eqref{PDE}.

\medskip

\nid \B{Determination of $f$.}

\medskip

For $s\ge1$ define $G(s):=H(s, 1)$.  Relation \eqref{scaling} gives
$H(s,x)=x^{-\mu} G(s)$, so that \eqref{PDE} is equivalent to
\begin{equation}\label{DYE}
 s\, G''(s)+(-\mu-s)\, G'(s)+e^{-\gl}\, G(s)=0,
\end{equation}
while the condition $H(1,x)=x^{-\mu}$ translates to $G(1)=1$.

\nid Let $a:=-e^{-\gl}$. For $\mu\notin \D{N}=\{0, 1, \ldots\}$, the general
solution of
\eqref{DYE} is (see (9.10.11) of \citet{LE})
$$C_1\Phi(a, -\mu; s)+C_2\Psi(a, -\mu; s)$$
with $\Phi, \Psi$ the confluent hypergeometric functions of the first and second kind respectively.

Restrict first to the case $\mu>0, \mu\notin \D{N}, \gl>0$. Then as
$s\to\infty$, $|\Phi(a, -\mu; s)|$ goes to infinity faster than any
polynomial (see relation (9.12.8) of \citet{LE}),
while $\Psi(a, -\mu; s)/s\to0$ because of \eqref{twominusb},
\eqref{PhiAsymptotic}, and noting that $a\in(-1, 0)$.
Since $|G(s)|\le s$, we get $C_1=0$. Then
$G(1)=1$ gives that
\begin{equation}
G(s)=\frac{\Psi(a, -\mu; s)}{\Psi(a, -\mu; 1)}.
\end{equation}
Note that the denominator is not zero because by \eqref{twominusb} it equals $\Psi(a+\mu+1,
2+\mu;1)$, which, because of \eqref{integrrepr}, is positive.

\nid Then
\begin{equation} f(s,x)=x^{-\mu}f(s,1)=\partial_s H(s,x)=x^{-\mu}
G'(s)=x^{-\mu}\frac{(-a)\Psi(a+1,1-\mu;s)}{\Psi(a,-\mu;1)}
\end{equation}
because of \eqref{PsiDerivative}, that is
\begin{equation} \label{PreMGF}
\EE_{s_0=s, x_0=x}(e^{-\gl M}
x_M^{-\mu}\textbf{1}_{M<\infty})=e^{-\gl}\frac{\Psi(1-e^{-\gl},1-\mu;s)}{\Psi(-e^{-\gl},
-\mu;1)}.
\end{equation}
The quantity in the expectation, for $\mu\in[0, 1], \gl\ge0$,  is
bounded by $\max\{1, x^{-1}\}$ because by \eqref{CEvolution2},
$(x_k)_{k\ge0}$ is increasing, 
and thus when sending $\gl, \mu\to 0^+$ in the last equality, we can
invoke the bounded convergence theorem to get $\PP_{s_0=s,
x_0=x}(M<\infty)=1$. We used \eqref{zero}, \eqref{minusone} for the
evaluation of the right hand side of the equality. 

Now using the continuity of both sides of \eqref{PreMGF} in $\mu$, we infer its validity for
$\mu\in\D{N}$ too. And similarly for $\mu\ge 0$ and $\gl=0$. In particular,
\begin{equation}\EE(e^{-\gl \C{N}-\mu F})=f(1,1)=e^{-\gl}\frac{\Psi(1-e^{-\gl},1-\mu;1)}{\Psi(-e^{-\gl},
-\mu;1)}
\end{equation}
for all $\gl, \mu\ge0$.
\end{proof}

\subsection{Analytic extension}
Our objective in this subsection is to extend equality
\eqref{LaplTransEqn} to all values of $\gl, \mu$ for which the left
hand side is finite.
Before proceeding, we collect some
facts concerning the function $\Psi$ which we will use in the rest of
the paper. For their proof, we refer the reader to \citet{LE}. 

$\Psi(\,\cdot\,, \,\cdot\, ;
\,\cdot\,)$
is defined in $\D{C}\times\D{C}\times(\D{C}\setminus (-\infty, 0])$ and is
analytic in all its arguments (\S 9.10 of \citet{LE}). Differentiation with respect to the first, second, and third argument will be denoted
by
$\partial_x, \partial_y, \partial_z$ respectively.
 In its domain, $\Psi$ 
satisfies
\begin{align}
\Psi(a, b; z)&=z^{1-b}\Psi(a-b+1, 2-b; z), \label{twominusb} \\
\Psi(a-1,b;z)+(b-2a-z)&\Psi(a,b;z)+a(a-b+1)\Psi(a+1,b;z)=0,
\label{arec}\\
\Psi(a-1,b;z)-z\Psi(a,b+1;z)&=(a-b)\Psi(a,b;z),
\label{abrec}\\
\partial_z \Psi(a, b; z)&=-a\Psi(a+1, b+1; z), \label{PsiDerivative}
\end{align}
while for $a,z$ with positive real part, it holds
\begin{equation}\label{integrrepr}
\Psi(a,b;z)=\Gamma(a)^{-1}\int_0^{+\infty} e^{-z t}
t^{a-1}(1+t)^{-a+b-1}dt.
\end{equation}
Relations \eqref{twominusb}, \eqref{arec}, \eqref{abrec}, \eqref{PsiDerivative},
\eqref{integrrepr} are respectivelly (9.10.8), (9.10.17), (9.10.14), (9.10.12), (9.11.6) 
of \citet{LE}.

We will also need some special values of $\Psi$
\begin{lemma} For $a\in(0, \infty), b\in \D{C}, z\in \D{C}\setminus(-\infty, 0]$, it holds
\begin{align}
\Psi(0, b; z)&=1, \label{zero} \\
\Psi(-1, b;z)&=z-b, \label{minusone}\\
\lim_{z\to+\infty} z^a \Psi(a, b; z)&=1, \label{PhiAsymptotic}
\intertext{while}
\partial_x\Psi(0, 1; 1)&=0, \label{Derivative01}\\
\partial_x\Psi(-1, 0; 1)&=1, \label{Derivative10}\\
\partial_x\Psi(0, 0; 1)&=-\int_0^\infty e^{-t} (1+t)^{-1}\, dt. \label{Derivative00}
\end{align}
\end{lemma}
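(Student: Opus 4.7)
The plan is to handle the six identities in three groups, leaning on the integral representation \eqref{integrrepr} and the recursions.

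For \eqref{zero}, $\Psi(a,b;\cdot)$ is characterized as the unique solution of Kummer's equation $zf''(z)+(b-z)f'(z)-af(z)=0$ obeying $z^a f(z)\to 1$ as $z\to+\infty$; when $a=0$ the constant function $f\equiv 1$ is a solution that trivially matches the asymptotic, so $\Psi(0,b;z)=1$. Identity \eqref{minusone} then follows by plugging $a=0$ into \eqref{arec}: the $\Psi(a+1,b;z)$ term drops out (its coefficient is $a(a-b+1)$), leaving $\Psi(-1,b;z)=(z-b)\Psi(0,b;z)=z-b$. For \eqref{PhiAsymptotic} with $\mathrm{Re}(a)>0$, substitute $u=zt$ in \eqref{integrrepr}; dominated convergence as $z\to+\infty$ (using that $(1+u/z)^{b-a-1}$ is dominated uniformly for $z\ge 1$ and tends pointwise to $1$) yields $z^a\Psi(a,b;z)\to\Gamma(a)^{-1}\int_0^\infty e^{-u}u^{a-1}\,du=1$.

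The core of the lemma is the three derivative values, which I would extract from the first-order expansion of $\Psi(a,b;1)$ in $a$ near $a=0^+$. Using \eqref{integrrepr} with $z=1$ and splitting the integration at $t=1$ to isolate the pole,
\begin{equation*}
\Gamma(a)\Psi(a,b;1)=\frac{1}{a}+I_b(a),
\end{equation*}
where $I_b(a):=\int_0^1 t^{a-1}\bigl(e^{-t}(1+t)^{b-a-1}-1\bigr)\,dt+\int_1^\infty t^{a-1}e^{-t}(1+t)^{b-a-1}\,dt$ is continuous at $a=0$ (the first integrand is $O(t^a)$ near $0$ since $e^{-t}(1+t)^{b-a-1}-1=O(t)$; the second decays exponentially). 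Combining with the Laurent expansion $\Gamma(a)=1/a-\gamma+O(a)$, where $\gamma$ denotes Euler's constant, and extracting the linear coefficient in $a$ gives
\begin{equation*}
\partial_x\Psi(0,b;1)=\gamma+I_b(0).
\end{equation*}
For $b=1$, $I_1(0)=\int_0^1(e^{-t}-1)/t\,dt+\int_1^\infty e^{-t}/t\,dt$ is the classical integral representation of $-\gamma$, giving \eqref{Derivative01}. For $b=0$, the difference $I_0(0)-I_1(0)$ collapses via $(1+t)^{-1}-1=-t/(1+t)$ into $-\int_0^\infty e^{-t}/(1+t)\,dt$, giving \eqref{Derivative00}. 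Finally, \eqref{Derivative10} follows algebraically by differentiating \eqref{abrec} in its first argument and setting $(a,b,z)=(0,0,1)$: this gives $\partial_x\Psi(-1,0;1)-\partial_x\Psi(0,1;1)=\Psi(0,0;1)=1$, which combined with \eqref{Derivative01} yields the claim.

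The main technical point is the expansion of $\Gamma(a)\Psi(a,b;1)$ near $a=0^+$: the $1/a$ pole from the integral must cancel exactly against the $1/a$ pole of $\Gamma(a)$, leaving a clean linear coefficient $\gamma+I_b(0)$. Justifying this rigorously requires confirming that $I_b(a)$ is continuous (indeed real-analytic) at $a=0$, which follows from standard dominated-convergence arguments applied to each piece of the splitting; everything else in the lemma is algebraic manipulation of the recursions.
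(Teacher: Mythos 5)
Your proof is correct, and the derivative computations take a genuinely different route from the paper's. For \eqref{zero} the paper argues more elementarily: by \eqref{PsiDerivative} with $a=0$ one has $\partial_z\Psi(0,b;z)=0$, so $\Psi(0,b;z)$ is independent of $z$, and then $\lim_{a\to0^+}\Psi(a,b;z)=1$ follows from \eqref{integrrepr} together with $a\Gamma(a)\to 1$ — this avoids invoking the uniqueness-via-asymptotics characterization of $\Psi$, which is slightly delicate at $a=0$ since $\Phi(0,b;z)=\Psi(0,b;z)=1$ and the two standard solutions coincide there. For \eqref{minusone}, \eqref{PhiAsymptotic}, \eqref{Derivative10} you and the paper do essentially the same thing. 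For \eqref{Derivative01} and \eqref{Derivative00} the paper computes the difference quotient directly: writing $\Psi(x,1;1)-1=\Gamma(x)^{-1}\int_0^\infty e^{-t}t^{x-1}\{(1+t)^{-x}-1\}\,dt$, dividing by $x$ and letting $x\to0^+$ kills the numerator by dominated convergence while $x\Gamma(x)\to1$, giving \eqref{Derivative01} without ever introducing Euler's constant; \eqref{Derivative00} is done analogously. Your approach instead isolates the $1/a$ pole of $\Gamma(a)\Psi(a,b;1)$, matches it against the pole of $\Gamma(a)$, and reads off $\partial_x\Psi(0,b;1)=\gamma+I_b(0)$ uniformly in $b$, then evaluates $I_1(0)=-\gamma$ from the classical integral representation of Euler's constant. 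This is more systematic and yields a formula for general $b$, at the cost of importing the $\gamma$-integral identity; the paper's computation is more self-contained and exactly tailored to the two values needed. Both are rigorous; the minor care you flag (justifying analyticity of $I_b(a)$ near $a=0$) corresponds to the dominated-convergence step in the paper's version, so the technical content is comparable.
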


\begin{proof}
For \eqref{zero}, note that by
\eqref{PsiDerivative}, $\Psi(0, b; z)$ is a function of $b$ alone, while it is
easy to see that for $z>0$, $\lim_{a\to 0^+} \Psi(a, b;z)=1$ (use
\eqref{integrrepr} and $\lim_{a\to 0^+} a \hspace{0.2ex} \Gamma(a)=1$). 
Then \eqref{minusone} follows from \eqref{zero} and \eqref{arec} by setting $a=0$.

Relation 
\eqref{PhiAsymptotic} follows from \eqref{integrrepr} by doing the change
of variables $y=zt$ in the integral and applying the dominated convergence
theorem.

\nid Regarding \eqref{Derivative01}, note that $\Psi(0, 1;1)=1$, and for $x>0$,
$$\frac{\Psi(x, 1;1)-1}{x}=\frac{1}{x\Gamma(x)}\int_0^\infty e^{-t} t^{x-1}\big\{(1+t)^{-x}-1\big\}\, dt.$$
For $x\to0^+$, the denominator goes to 1, while the numerator goes to zero by the dominated convergence theorem.

\nid Finally, \eqref{Derivative10} follows from\eqref{abrec}, \eqref{zero} and \eqref{Derivative01},  while \eqref{Derivative00} is proven in the same way as \eqref{Derivative01} taking into account that $\Psi(0, 0;1)=1$.
\end{proof}

Define
\begin{equation}\Xi(\gl, \mu):=e^{\gl}\frac{\Psi(1-e^{\gl},
1+\mu;1)}{\Psi(-e^{\gl}, \mu;1)}
\end{equation}
for all $\gl, \mu\in \D{C}$ that this makes sense, that is, everywhere except possibly at values where the denominator is 0. Proposition \ref{LaplTrans}
shows that $\Xi(\gl, \mu)=\EE(e^{\gl \C{N}+\mu F})=:K(\gl, \mu)$ for $\gl,
\mu\le 0$. We show below that this holds throughout 
$$D_K:=\{(\gl, \mu)\in\D{R}^2: K(\gl, \mu)<\infty\}.$$
The following two lemmas
show, among other things, that $D_K$ contains a neighborhood of $(0,
0)$.

\begin{lemma}\label{FiniteGF}
The number
$$z_0:=\sup\{z>0: \EE(z^{\C{N}})<\infty\}\in(1,2),$$
and $\EE(z_0^{\C{N}})=\infty$.
\end{lemma}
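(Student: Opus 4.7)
The plan is to apply Proposition \ref{LaplTrans} at $\mu = 0$ to obtain a closed form for $z \mapsto \EE(z^{\C{N}})$ on $(0, 1]$, analytically continue it, and identify $z_0$ with the first positive pole of the continuation via Pringsheim's theorem.

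Substituting $\mu = 0$ in \eqref{LaplTransEqn} and writing $z = e^{-\gl} \in (0, 1]$ gives
\begin{equation*}
\EE(z^{\C{N}}) = F(z) := z\,\frac{\Psi(1-z, 1; 1)}{\Psi(-z, 0; 1)},
\end{equation*}
with $F(1) = 1$ by \eqref{zero} and \eqref{minusone}. Using \eqref{twominusb} one rewrites the denominator as $\Psi(-z, 0; 1) = \Psi(1-z, 2; 1)$, which by the integral representation \eqref{integrrepr} is strictly positive for $z \in [0, 1]$. Hence $F$ is analytic in a neighborhood of $[0, 1]$, and by the identity theorem it coincides with the power series $\sum_n \PP(\C{N} = n)\, z^n$ throughout its disk of convergence. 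This already forces $z_0 > 1$, since otherwise Pringsheim's theorem would require $F$ to be singular at $z = 1$.

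For the upper bound I would apply the recursion \eqref{arec} with $(a, b, z) = (-1, 0, 1)$, together with \eqref{minusone}, to compute $\Psi(-2, 0; 1) = -1$. Continuity and the intermediate value theorem then produce a smallest zero $z^* \in (1, 2)$ of $\Psi(-z, 0; 1)$. The step I expect to require a little care is checking that $z^*$ is a genuine pole of $F$ and not a removable singularity; for this I would apply \eqref{abrec} with $a = 1 - z^*$, $b = 0$ at argument $1$, which combined with $\Psi(-z^*, 0; 1) = 0$ yields
\begin{equation*}
\Psi(1 - z^*, 1; 1) = (z^* - 1)\, \Psi(1 - z^*, 0; 1),
\end{equation*}
and then \eqref{twominusb} and \eqref{integrrepr} make $\Psi(1 - z^*, 0; 1) = \Psi(2 - z^*, 2; 1) > 0$ since $2 - z^* \in (0, 1)$. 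Consequently $F(z) \to +\infty$ as $z \uparrow z^*$.

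To conclude, Pringsheim's theorem applied to the nonnegative power series $\sum_n \PP(\C{N} = n)\, z^n$ identifies its radius of convergence with a singular point of the analytic continuation. That continuation equals $F$ on $(0, z^*)$, and $F$ is regular on $(0, z^*)$ and singular at $z^*$, so $z_0 = z^* \in (1, 2)$. Monotone convergence then yields $\EE(z_0^{\C{N}}) = \lim_{z \uparrow z_0} F(z) = +\infty$.
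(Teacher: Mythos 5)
Your proof is correct and follows essentially the same route as the paper's: write $\EE(z^{\C{N}})$ as $z\,\Psi(1-z,1;1)/\Psi(-z,0;1)$, use the integral representation on $[0,1]$ and the value $-1$ at $z=2$ to locate the first zero $z^*\in(1,2)$ of the denominator, verify the numerator does not vanish there, and invoke Pringsheim's theorem to identify $z^*$ with the radius of convergence of the nonnegative power series. The only cosmetic deviation is that you check $\Psi(1-z^*,1;1)\neq 0$ by applying \eqref{abrec} at the specific point $z^*$, whereas the paper shows the numerator is positive on all of $[0,2)$ directly via \eqref{arec} and \eqref{integrrepr}; both routes work.
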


\nid Mathematica gives the
approximate value $z_0\approx 1.57391$

\begin{proof}
By Proposition \ref{LaplTrans}, we have
\begin{equation}\label{GF}\EE(z^{\C{N}})= z\frac{\Psi(1-z,
1;1)}{\Psi(-z, 0;1)}
\end{equation}
for all $z$ with $z\in[0, 1]$. Call $W(z)$ the right hand side of \eqref{GF}.
The left hand side is a power series in $z$ with positive coefficients
$a_k:=\PP(\C{N}=k)$ for all $k\ge0$. The right hand side is a meromorphic
function on the plane. It is finite at 0, so that it has a power series
development centered at zero. Since the coefficients are positive, the radius of
convergence coincides with the smallest pole of $W$ on $(0, \infty)$. We will
show that this occurs at a point  $z_0\in(1,2)$.

The denominator in \eqref{GF} is a continuous function of $z$ and equals $\Psi(1-z, 2,1)$, which is positive in
$[0,1]$ and has value -1 at $z=2$ (use \eqref{twominusb},
\eqref{integrrepr}, \eqref{minusone} correspondingly for the last three claims). Thus, 
it has a smallest root
in $(1,2)$, call it $z_0$. On the other hand, the numerator is positive in $[0, 2)$. To see that,
let $y=2-z$,  and note that, since $y>0$, \eqref{arec} and  the integral representation 
\eqref{integrrepr} give 
$$\Psi(y-1, 1,1)=y
(2\Psi(y,1,1)-y \Psi(y+1,1,1))=y\Gamma(y)^{-1}\int_0^{+\infty} e^{-t}
t^{y-1}(1+t)^{-y-1}(t+2)\, dt>0.$$ 
Thus, the power series
for $W(z)$ centered at 0 has radius of convergence $z_0$. As we already noted,
the
expectation on the left hand side of \eqref{GF} is a power
series of $z$. It follows that it too has radius of convergence
$z_0$, thus the two sides of \eqref{GF} are finite and equal for all $z\in\D{C}$ with $|z|<z_0$.
The fact that $z_0$ is a pole of $W$ gives that $\EE(z_0^\C{N})=\infty$ and concludes the proof of the lemma.  \end{proof}

Next, we list some properties of the set $D_K$.

\begin{lemma}

\begin{enumerate}

\item $D_K$ is convex. \label{Pconvex}

\item $(x, y)\in D_K$ implies that $(-\infty, x]\times(-\infty, y]\in D_K$. \label{PclosedWS}

\item $(\gl, 0)\in D_K$ exactly when $\gl<\gl_0:=\log z_0>0$. \label{PxAxis}

\item $(0, \mu)\in D_K$ exactly when $\mu<1$. \label{PyAxis}

 \item The intersection of $D_K$ with the second and fourth quadrant is under
the line that passes through $(\gl_0, 0), (0, 1)$. \label{PUpperLine}

\item The interior of the triangle with vertices $(0,0), (0, 1), (\gl_0, 0)$ is
inside $D_K$. \label{PTriangle}

\end{enumerate}

\end{lemma}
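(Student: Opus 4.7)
My plan is to address the six properties in turn, using H\"older's inequality and pointwise monotonicity for the structural claims, Lemma \ref{FiniteGF} and the identified distribution of $F$ for the axis claims, and simple plane-convex geometry for the rest. For (\ref{Pconvex}), H\"older's inequality applied with conjugate exponents $1/t$ and $1/(1-t)$ to $e^{t(\gl_1\C{N}+\mu_1 F)}\cdot e^{(1-t)(\gl_2\C{N}+\mu_2 F)}$ gives
\[
K(t\gl_1+(1-t)\gl_2,\, t\mu_1+(1-t)\mu_2)\le K(\gl_1,\mu_1)^t\,K(\gl_2,\mu_2)^{1-t}<\infty,
\]
so $D_K$ is convex. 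For (\ref{PclosedWS}), I would observe that $\C{N}\ge 1$ and $F=\log z_{\C{N}+1}\ge 0$ almost surely (since $z_1=1$ and $z_{k+1}>z_k$ by construction), so decreasing $\gl$ or $\mu$ componentwise only decreases the integrand $e^{\gl\C{N}+\mu F}$ pointwise, preserving finiteness.

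For the axis claims, setting $\mu=0$ reduces $K(\gl,0)$ to $\EE(z^{\C{N}})$ with $z=e^\gl$, which by Lemma \ref{FiniteGF} is finite exactly when $z<z_0$, i.e.\ $\gl<\gl_0$; this yields (\ref{PxAxis}). Setting $\gl=0$ yields $K(0,\mu)=\EE(e^{\mu F})$, and since Section \ref{InsideAnExcursion} identifies $F\sim\mathrm{Exponential}(1)$, this is finite iff $\mu<1$, giving (\ref{PyAxis}). For (\ref{PTriangle}), any point $(\gl,\mu)$ in the open triangle satisfies $\gl,\mu>0$ and $\gl/\gl_0+\mu<1$, so for all sufficiently small $\epsilon>0$ I can write $(\gl,\mu)$ as a convex combination of $(0,0), (\gl_0-\epsilon,0), (0,1-\epsilon)$ with nonnegative residual weight $1-\gl/(\gl_0-\epsilon)-\mu/(1-\epsilon)$; all three ``vertices'' lie in $D_K$ by (\ref{PxAxis}) and (\ref{PyAxis}), and convexity then gives $(\gl,\mu)\in D_K$.

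The main obstacle is (\ref{PUpperLine}). I argue by contradiction in the second quadrant (the fourth is analogous). Suppose $(\gl,\mu)\in D_K$ with $\gl<0$ and $\mu>1-\gl/\gl_0$; then $\mu>1$ and $-\gl/(\mu-1)<\gl_0$, so I can pick $\gl^*\in\bigl(-\gl/(\mu-1),\gl_0\bigr)$, for which (\ref{PxAxis}) gives $(\gl^*,0)\in D_K$. The segment from $(\gl,\mu)$ to $(\gl^*,0)$ crosses the $\mu$-axis at height $\gl^*\mu/(\gl^*-\gl)$, and the choice $\gl^*>-\gl/(\mu-1)$ ensures this height exceeds $1$; by convexity some $(0,\mu')$ with $\mu'>1$ lies in $D_K$, contradicting (\ref{PyAxis}). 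The fourth-quadrant case, for which $\gl>0$, $\mu<0$, $\mu>1-\gl/\gl_0$ forces $\gl>\gl_0$, is handled symmetrically: I take a segment from $(\gl,\mu)$ to $(0,\mu^*)$ with $\mu^*\in\bigl(-\gl_0\mu/(\gl-\gl_0),1\bigr)$, whose crossing of the $\gl$-axis lies beyond $\gl_0$, contradicting (\ref{PxAxis}). The only non-routine ingredient across the proof is the clean choice of interpolation endpoints in this last step, so that the relevant crossing inequality falls out automatically from the hypothesis $\mu>1-\gl/\gl_0$.
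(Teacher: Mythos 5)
Your proof is correct and follows the same structure the paper uses: H\"older for convexity, nonnegativity of $\C{N}$ and $F$ for downward closure, Lemma \ref{FiniteGF} for the $\gl$-axis, $F\sim\mathrm{Exponential}(1)$ for the $\mu$-axis, and then (\ref{PUpperLine}), (\ref{PTriangle}) as consequences of (\ref{Pconvex})--(\ref{PyAxis}). The paper states the last two items as following from the first four without elaboration; your explicit interpolation/contradiction arguments and the small-$\epsilon$ convex-combination for the triangle are a correct filling-in of that implicit geometry, not a different route.
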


\begin{proof}

\ref{Pconvex} follows from H\"older's inequality, \ref{PclosedWS} is true because $\C{N}$ and $F$ take positive values, \ref{PxAxis} is shown in Lemma
\ref{FiniteGF}, \ref{PyAxis} follows from the fact that $F\sim$ Exponential(1), and finaly \ref{PUpperLine} and \ref{PTriangle} follow from \ref{Pconvex},  \ref{PclosedWS}, \ref{PxAxis}, \ref{PyAxis}.
\end{proof}

And now we are ready to state the main result of this subsection,
which completes the proof of Theorem \ref{MGFProposition}.

\begin{lemma} \label{AnalyticExtension}

1. $K(\gl, \mu)=\Xi(\gl, \mu)$ for every $(\gl, \mu)\in D_K$.

2. $K(\gl, \mu)=\infty$ for every $(\gl, \mu)\in \partial D_K$. In particular,
$D_K$ is open.

3. $\EE(\C{N})=2$.
\end{lemma}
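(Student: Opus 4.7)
The three assertions will be proved in order. Part (1) rests on analytic continuation on $\operatorname{int}(D_K)$ together with a monotone-convergence step at any boundary point of $D_K$ that happens to lie in $D_K$. Part (2) is the key step: it uses a Pringsheim--Landau argument to force every boundary point of $D_K$ to be a pole of $\Xi$. Part (3) follows by differentiating $\Xi$ at the origin.

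For (1), extend $K$ to the tube $T:=\operatorname{int}(D_K)+i\D{R}^2\subset\D{C}^2$, where it is holomorphic by differentiation under the expectation. The function $\Xi$ is meromorphic on $\D{C}^2$, holomorphic off the zero set $Z$ of $(z,w)\mapsto\Psi(-e^z,w;1)$. Proposition \ref{LaplTrans} gives $K=\Xi$ on the real open subset $(-\infty,0)^2$ of $\D{R}^2$, and the several-variables identity principle (two holomorphic functions on a connected open set that agree on a real open subset coincide) then yields $K=\Xi$ on the connected set $T\setminus Z$. Because $K$ is finite on $T$, in fact $T\cap Z=\emptyset$, so $K=\Xi$ on $T$, and hence on $\operatorname{int}(D_K)$. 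For $(\gl,\mu)\in D_K\cap\partial D_K$, properties \ref{Pconvex} and \ref{PclosedWS} place $(\gl-\gep,\mu-\gep)\in\operatorname{int}(D_K)$ for every $\gep>0$; monotone convergence gives $K(\gl,\mu)=\lim_{\gep\downarrow 0}\Xi(\gl-\gep,\mu-\gep)$, and the finiteness of this limit forces $\Psi(-e^\gl,\mu;1)\ne 0$, so continuity of $\Xi$ at $(\gl,\mu)$ yields $K(\gl,\mu)=\Xi(\gl,\mu)$.

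For (2), fix $(\gl^*,\mu^*)\in\partial D_K$ and choose $(\gl_0,\mu_0)\in\operatorname{int}(D_K)$ with $\gl_0<\gl^*$ and $\mu_0<\mu^*$ (possible since $(-\infty,0)^2\subseteq\operatorname{int}(D_K)$). Set $(a,b):=(\gl^*-\gl_0,\mu^*-\mu_0)\in(0,\infty)^2$ and consider
\begin{equation*}
g(s):=K(\gl_0+sa,\mu_0+sb)=K(\gl_0,\mu_0)\,\wt{\EE}\!\left(e^{s(a\C{N}+bF)}\right),
\end{equation*}
where $\wt{\EE}$ denotes expectation under the tilted probability $d\wt{\PP}:=e^{\gl_0\C{N}+\mu_0 F}\,d\PP/K(\gl_0,\mu_0)$. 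Up to the positive constant $K(\gl_0,\mu_0)$, $g$ is the one-dimensional moment generating function of the positive random variable $a\C{N}+bF$; property \ref{PclosedWS} together with the definition of $\partial D_K$ shows that its abscissa of convergence is exactly $s^*=1$ (for $s<1$ the argument lies in $\operatorname{int}(D_K)$, while $g(s)<\infty$ for any $s>1$ would produce a full neighborhood of $(\gl^*,\mu^*)$ inside $D_K$, contradicting $(\gl^*,\mu^*)\in\partial D_K$). By (1), the analytic continuation of $g$ past $s^*=1$ along the real axis is $s\mapsto\Xi(\gl_0+sa,\mu_0+sb)$. The Pringsheim--Landau theorem asserts that the abscissa of convergence of the Laplace transform of a positive measure is a singular point of its analytic continuation, which excludes both a regular point and a removable singularity at $s=1$. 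Hence $\Psi(-e^{\gl^*},\mu^*;1)=0$, and along the ray the denominator of $\Xi$ vanishes to strictly higher order than the numerator, so $\Xi(\gl_0+sa,\mu_0+sb)\to+\infty$ as $s\uparrow 1$ (the sign is forced by positivity of $\Xi=K$ on $\operatorname{int}(D_K)$). Monotone convergence then gives $K(\gl^*,\mu^*)=+\infty$, which also shows that $D_K$ is open.

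For (3), parts (1) and (2) guarantee that $K=\Xi$ on a neighborhood of $(0,0)$ (which lies in $\operatorname{int}(D_K)$ by properties \ref{PxAxis}, \ref{PyAxis}, \ref{PTriangle}), so $\EE(\C{N})=\partial_\gl K(0,0)=\partial_\gl\Xi(0,0)$. From \eqref{zero} and \eqref{minusone} one reads $\Xi(0,0)=1$; the chain rule together with \eqref{Derivative01}, \eqref{Derivative10} gives
\begin{equation*}
\partial_\gl\Psi(1-e^\gl,1;1)\big|_{\gl=0}=-\partial_x\Psi(0,1;1)=0,\qquad
\partial_\gl\Psi(-e^\gl,0;1)\big|_{\gl=0}=-\partial_x\Psi(-1,0;1)=-1,
\end{equation*}
and the quotient rule applied to $\Xi(\gl,0)=e^\gl\Psi(1-e^\gl,1;1)/\Psi(-e^\gl,0;1)$ yields $\partial_\gl\Xi(0,0)=1+1=2$. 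The principal difficulty lies in step (2), where the positivity of the joint law of $(\C{N},F)$ must be invoked in a Landau-type argument to rule out the removable-singularity scenario for $\Xi$ at the boundary.
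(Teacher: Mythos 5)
Your proof is correct, and while it reaches the same conclusions by the same underlying mechanism as the paper, it organizes the argument quite differently. The paper proves (1) and (2) simultaneously by slicing along axis-parallel lines: with $\mu\le 0$ fixed, the power series of $\gl\mapsto K(\gl,\mu)$ has nonnegative coefficients, so Pringsheim's theorem forces its radius of convergence to coincide with the first singularity of $\gl\mapsto\Xi(\gl,\mu)$ on $[0,\infty)$, which must be a zero of the denominator; repeating this with $\gl\le\gl_0$ fixed and using the southwest-closure and convexity of $D_K$ to stitch the two families of slices together yields the description of $\partial D_K$ and the identity $K=\Xi$ on $D_K$. You instead prove (1) cleanly via the several-variables identity principle on the tube domain $\operatorname{int}(D_K)+i\mathbb{R}^2$ and then prove (2) by firing a ray from an arbitrary interior point to an arbitrary boundary point, exponentially tilting the measure to reduce to a one-dimensional Laplace transform of a nonnegative random variable, and invoking Landau's theorem. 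Both approaches rest on the same analytic core (positivity of coefficients pins the boundary of $D_K$ to real singularities of $\Xi$), but your ray-wise argument treats all boundary points uniformly and bypasses the need to verify that the two families of axis-parallel slices exhaust $\partial D_K$. One small imprecision in your part (1): from $K$ being finite and holomorphic on $T$ you assert $T\cap Z=\emptyset$, but a priori $Z\cap T$ could consist of removable singularities of $\Xi$ (common zeros of numerator and denominator); the inclusion is stronger than needed and not actually established, though the conclusion $K=\Xi$ on $T$ (with $\Xi$ read as its holomorphic extension across any removable singularities) stands, and the point is moot for the lemma. The computations in (3) check out.
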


\begin{proof}

\nid 1 and 2. Fix $\mu\le 0$. Since $\EE(e^{\gl \C{N}+\mu F})$ is finite for $\gl\in[0,
\gl_0)$, it follows that the power series in $\gl$
$$\EE(e^{\gl \C{N}+\mu F})=\sum_{k=0}^\infty \frac{1}{k!}\EE(e^{\mu F}
\C{N}^k)\gl^k $$
has radius of convergence at least $\gl_0$. Also
$$\gl\mapsto e^{\gl}\frac{\Psi(1-e^{\gl},
1+\mu;1)}{\Psi(-e^{\gl}, \mu;1)},$$
is analytic near zero because the value of the denominator at 0 is $1-\mu\ne0$ (recall \eqref{minusone}), and $\Psi$ is entire in its first argument. Since it agrees with the previous power series in a line
segment, they agree on the ball of convergence of the series. In particular, its
development around zero has positive coefficients and consequently its radius of
convergence, $\hat \gl(\mu)$, coincides with its smallest singularity in $[0, \infty)$ if such exists, otherwise it is infinite. Since the numerator is entire in $\gl$, the only possibility for a singularity is at a zero of the denominator. Thus $K(\gl, \mu)<\infty$ exactly for $\gl<\hat \gl(\mu)$ and for all such
$\gl$ it holds $K(\gl, \mu)=\Xi(\gl, \mu)$. Because of Property \ref{PUpperLine} of the previous lemma, it
follows that $\gl(\mu)<\infty$. Property 1 gives that $\mu\mapsto \hat \gl(\mu)$ is
concave in $(-\infty, 0]$, thus continuous in $(-\infty, 0)$, and Properties \ref{Pconvex},  \ref{PclosedWS}, \ref{PxAxis}  give that it is also left continuous at zero with value $\hat \gl(0)=\gl_0$.

Now fix $\gl<\gl_0$. $\EE(e^{\gl \C{N}+\mu F})$ is finite for small enough
positive $\mu$ due to Property \ref{PTriangle}. With similar reasoning as
above, we show that there is a concave function $\gl\mapsto \hat\mu(\gl)$ continuous
on $(-\infty, \gl_0]$, $\hat \mu(\gl_0)=0$, so that for $\gl\in(-\infty, \gl_0]$ it
holds $K(\gl, \mu)<\infty$ iff $\mu<\hat \mu(\gl)$ and moreover $K(\gl, \mu)=\Xi(\gl,
\mu)$. 

Thus $$\partial D_K=\{(\hat\gl(\mu), \mu): \mu\le 0\}\cup \{(\gl, \hat\mu(\gl)): \gl\le\gl_0\},$$
and on this set $K$ takes the value $\infty$.
This finishes the proof of the first two statements.

3. It follows from the first claim of the Lemma, the formula for $\Xi$, and
differentiation.
\end{proof}

\section{Proof of Theorem \ref{fluctuations}} \label{ProofCLT}

Let $(S_k)_{k\in\mathbb{Z}}$ be the points of the renewal $\psi$ in increasing
order such that $S_{-1}<0\le S_0$, and for $k\in\mathbb{Z}$,
\begin{align*}
X_k&:=S_k-S_{k-1},\\
\C{N}_k&:=N[S_{k-1}, S_k).
\end{align*}
The random variables $\{(X_k, \C{N}_k): k\ge 1\}$ are i.i.d.,  
each with distribution the same as $(F+Z, \C{N})$, defined in Section \ref{structureSSection}. Then $\EE(X_1)=1+(1/2)=3/2$, and $\EE(\C{N})=2$ by Lemma \ref{AnalyticExtension}. Let also for $k\ge1$, 
$$Y_k:=\C{N}_k-a X_k,$$ 
where $a:=\EE(\C{N}_1)/\EE(X_1)=4/3$. Then $\{Y_k:k\ge1\}$ are i.i.d. with mean value 0, and we will see below that they have finite variance. By the central limit theorem,
\begin{align*}\frac{N[0, S_k)-a S_k}{\sqrt{k}}&=\frac{N[0,
S_0)+\C{N}_1+\cdots+\C{N}_k-a(S_0+X_1+\cdots+X_k)}{\sqrt{k}}\\
&=\frac{Y_1+\cdots+Y_k+N[0, S_1)-aS_0}{\sqrt{k}}
\Rightarrow  N(0, \var(Y_1))
\end{align*}
for $k\to\infty$. 

\nid For $t>0$, let $n_t:=\max\{k: S_k\le t\}$. Then, by the renewal theorem, we have $\lim_{t\to\infty} n_t/t=1/\mu$ with
$\mu:=\EE(X_1)=3/2$, thus in the same way as in Exercise 3.4.6 in \citet{DU}, we
get 
\begin{equation}\label{CLT1}\frac{N[0, S_{n_t})-a S_{n_t}}{\sqrt{t}}\Rightarrow 
N(0, \var(Y_1)/\mu).
\end{equation}
Now note that the families $\{S_{n_t}-t: t>0\}, \{N[0, t]-N[0, S_{n_t}): t>0\}$
are tight, because by stationarity, for every $t>0$,
\begin{align*}0&\le t-S_{n_t}\le S_{n_t+1}-S_{n_t}\overset{d}{=} 
S_0-S_{-1},\\
0&\le N[0, t]-N[0, S_{n_t})\le N[S_{n_t}, S_{n_t+1})\overset{d}{=} N[S_{-1},
S_0).
\end{align*}
Thus \eqref{CLT1} and Slutsky's theorem give 
\begin{equation}\label{CLT2}\frac{N[0, t]-at}{\sqrt{t}}\Rightarrow  \mathcal{N}(0,
\var(Y_1)/\mu).
\end{equation}
It remains to compute $\var(Y_1)$.  We have  $Y_1\overset{d}{=}\C{N}-a(F+Z)$, and recall that $F\sim$ Exponential(1), $Z\sim$ Exponential(2) is independent of $(\C{N}, F)$, and the moment generating function of $(\C{N}, F)$ is given in Theorem \ref{MGFProposition}. Thus
\begin{align}\label{VarComputation} \var(Y_1)
&=\var(\C{N})+a^2\var(F+Z)-2a\cov(\C{N}, F) \notag\\
&=-\EE(\C{N})^2 +a^2(\var(F)+\var(Z))+2a \EE(\C{N}) \EE(F)+\EE(\C{N}^2)-2a\EE(\C{N}F) \notag \\
&=\frac{32}{9}-\partial_{x x} \Psi(-1,0;1)+\partial_{x x} \Psi(0,1;
1)-\frac{8}{3}\{\partial_{x y} \Psi(-1,0;1)-\partial_{xy} \Psi(0,1;1)\} \notag \\&=
\frac{32}{9}+\frac{2}{3}\partial_x \Psi(0,0;1)=\frac{32}{9}-\frac{2}{3}\int_0^\infty e^{-t} (1+t)^{-1}\, dt.
\end{align}
For the third equality, we use the formula for the moment generating function of $(\C{N}, F)$, given in Theorem \ref{MGFProposition}, and \eqref{Derivative01}, \eqref{Derivative10}. 

\nid The fourth equality is true because by \eqref{abrec}, 
$$\Psi(x-1, y;1)-\Psi(x, y+1;1)=(x-y)\Psi(x, y; 1),$$
so that
\begin{align*}\partial_{x x} \Psi(-1,0;1)-\partial_{x x} \Psi(0,1;
1) &=
\partial_{x x}\{(x-y)\Psi(x, y;1)\}|_{x=y=0}=2\partial_x \Psi(0, 0;1),\\
\partial_{x y} \Psi(-1,0;1)-\partial_{xy} \Psi(0,1;1)&=
\partial_{x y}\{(x-y)\Psi(x, y;1)\}|_{x=y=0}\\&=-\partial_x \Psi(0, 0;1)+\partial_y \Psi(0, 0;1)=-\partial_x \Psi(0, 0;1).
\end{align*}
We used  \eqref{zero} in the last equality. The last equality in \eqref{VarComputation} follows from  \eqref{Derivative00}.

\section{Proof of Corrolary \ref{frequency}} \label{elementaryProof}

First we prove \eqref{meanDensity}.  We use the notation of Section \ref{ProofCLT}. For $n\ge1$,
$$N[0, S_n]=N[0, S_0)+1+\sum_{k=1}^n \C{N}_k.$$
Thus,
$$\lim_{n\to\infty} \frac{N[0, S_n]}{S_n}=\lim_{n\to\infty} \frac{N[0,
S_n]/n}{S_n/n}=\frac{\EE(\C{N})}{\EE(X_1)}=\frac{2}{3}\EE(\C{N}).$$
Since the process $(N[0, S_n])_{n\ge1}$ is increasing in $n$ and
$\lim_{n\to\infty} S_{n+1}/S_n=1$, with interpolation we get that
\begin{equation}\label{CorrolaryLimit}\lim_{n\to\infty} \frac{N[0, t]}{t}=\frac{2}{3}\EE(\C{N}).
\end{equation}
The proof of \eqref{meanDensity} is completed by noting that $\EE(\C{N})=2$ because of Theorem
\ref{MGFProposition}. However, since the proof of that theorem is quite
involved, we give in the following section an easy proof of $\EE(\C{N})=2$.

For $a>0$, the stationarity of $\xi$ and the ergodic theorem give that for $n\to\infty$,
\begin{equation}
\frac{N[0, na)}{n}=\frac{1}{n}\sum_{k=1}^n N[(k-1)a, ka)\to G
\end{equation}
a.s. and in $L^1$, where $G$ is a random variable.  
By \eqref{CorrolaryLimit}, $G=(4/3)a$, and since $\EE N[0, na)=n\EE N[0, a)$, the $L^1$ convergence  gives that $\EE N[0, a)=(4/3)a$. Now the stationarity of $\xi$ together with standard arguments show that $\EE N(A)=(3/4)\gl(A)$ for each Borel $A\subset\D{R}$.

\section{The expected value of $\C{N}$} \label{expComputation}
Although the expectation of $\C{N}$ was computed in Theorem
\ref{MGFProposition}, here we give an
alternative, elementary derivation based on a double counting argument. 

As noted in the proof of Proposition \ref{LaplTrans} (Subsection
\ref{LaplaceSubsection}), $\C{N}$ has the same law
as $\C{N}(\gep, 1)$ where $\gep$ is an excursion with law $n(\cdot\, |
\,
\ol{\gep}\ge 1)$. Expectation with respect to this law will be
denoted by $\EE_n( \cdot \, | \, \ol{\gep}\ge 1)$.

\begin{lemma}
 $\EE_n(\C{N}(\gep, 1) \, | \, \ol{\gep}\ge 1)=2$.
\end{lemma}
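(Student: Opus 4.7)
The plan is to evaluate $\EE(\mathcal{N})$ via Wald's identity applied to $\log \eta_\nu$, and then to compute $\EE(\log \eta_\nu)$ a second way from the excursion picture. By the correspondence between \eqref{excursionVariables} and \eqref{introductionVariables}, $\mathcal{N}(\gep, 1)$ has the same distribution as the stopping index $\nu$ of Section \ref{InsideAnExcursion}, with $\eta_0 = 1$ and $\eta_{k+1} = \eta_k(1 + \gb_k)$, where the $\gb_k$ are i.i.d.\ of density $(1 + x)^{-2}\mathbf{1}_{x > 0}$. A direct change of variables gives $\log(1 + \gb_k) \sim \text{Exp}(1)$, so
\[
\log \eta_\nu = \sum_{k = 0}^{\nu - 1} \log(1 + \gb_k)
\]
is a sum of $\nu$ i.i.d.\ nonnegative $\text{Exp}(1)$ terms with $\nu$ a stopping time. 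Wald's identity (valid in $[0, \infty]$ for nonnegative summands, so no a priori integrability of $\nu$ is needed) yields
\[
\EE(\nu) = \EE(\log \eta_\nu).
\]

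The ``double counting'' is a second, independent evaluation of the right-hand side. By the very definition of $\nu$, the $\nu$-th would-be record is the first that fails to fit below $0$, i.e.\ the sub-excursion of $\gamma - \ul{\gamma}$ that brings $\gamma$ back up to $0$; consequently $|y_\nu| = \ol{\gep}$. Writing $\eta_\nu = \ol{\gep} \cdot U$ with $U := \eta_\nu/\ol{\gep}$, the tail formula $n(\ol{\gep} > h) = 1/h$ applied to the conditional law of the terminating sub-excursion (given $|y_\nu| = l+1$) shows that $U$ has the Pareto$(1)$ density $u^{-2}\mathbf{1}_{u \ge 1}$ and is independent of $\ol{\gep}$. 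Under $n(\,\cdot\,|\,\ol{\gep} \ge 1)$, the same tail formula makes $\ol{\gep}$ itself Pareto$(1)$ on $[1, \infty)$, so both $\log \ol{\gep}$ and $\log U$ are $\text{Exp}(1)$, and
\[
\EE(\log \eta_\nu) = \EE(\log \ol{\gep}) + \EE(\log U) = 1 + 1 = 2.
\]
Combined with the Wald identity this gives $\EE(\mathcal{N}) = \EE(\nu) = 2$.

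The step I expect to require most care is the identification $|y_\nu| = \ol{\gep}$ together with the independent Pareto$(1)$ distribution of $U$: one must verify that as long as $s_k := |y_k|/\eta_k \ge 1$ the $k$-th would-be record is a genuine non-terminating sub-excursion of $\gamma - \ul{\gamma}$, so that the first failure $s_\nu < 1$ must correspond to the sub-excursion that sends $\gamma$ back up to $0$. The Pareto$(1)$ law of $U$ and its independence from $\ol{\gep}$ then follow from the scale-invariant tail formula $n(\ol{\gep} > h) = 1/h$, by the same kind of strong Markov argument as in the proof of Lemma \ref{overshoots}(i).
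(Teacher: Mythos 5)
Your proof is correct, but it is a genuinely different argument from the one in the paper. The paper embeds the problem in a one-sided Brownian path $B$ (with $B\vert(-\infty,0)=+\infty$), splits the path in two ways (the $h$-record decomposition and the $\psi$-renewal-plus-clusters decomposition), applies the law of large numbers to each to count jumps of $x_B$ in $[1,x]$ as $x\to\infty$, and equates the two asymptotic densities, $1$ and $\EE_n(\C{N}(\gep,1)\mid\ol{\gep}\ge1)/2$. Your argument is entirely local to a single cluster: you apply Wald's identity (in its nonnegative-summand form, so no prior finiteness of $\EE(\nu)$ is required and there is no circularity) to $\log\eta_\nu=\sum_{k=0}^{\nu-1}\log(1+\gb_k)$, a $\nu$-fold sum of i.i.d.\ Exp(1) terms, to get $\EE(\C{N})=\EE(\log\eta_\nu)$; you then evaluate $\EE(\log\eta_\nu)$ via the factorization $\eta_\nu=\ol{\gep}\cdot U$, noting that $\log\ol{\gep}\sim$ Exp(1) under $n(\,\cdot\mid\ol{\gep}\ge1)$ (the tail formula $n(\ol{\gep}>h)=1/h$), and that $U=\eta_\nu/|y_\nu|$ is Pareto(1), independent of the past, by the same strong Markov and excursion-measure argument used in the paper's analysis of the $(\ga_k,\gb_k)$. (For the final step the independence of $\ol{\gep}$ and $U$ is actually not needed: linearity of expectation alone gives $\EE(\log\eta_\nu)=\EE(\log\ol{\gep})+\EE(\log U)=1+1=2$.) Your approach dispenses with the second path decomposition and the ergodic/LLN machinery of the paper's proof, reusing instead only the excursion-theoretic facts already established in Section \ref{InsideAnExcursion}, and is arguably cleaner; the paper's double-counting proof has the advantage of being self-contained at the level of the global path picture without invoking Wald.
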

\begin{proof}
For the path $B$ with $B|(-\infty, 0)=+\infty$ and $B|[0,+\infty)$ a standard
Brownian motion, we define the process $x_B$ exactly as in the introduction. Now $x_B$ is an increasing function, it moves always forward to
deeper and deeper valleys of $B$. We will count in two ways the
number $\nu^+(x)$ of jumps of $x_B$ in $[1,x]$.

\smallskip

\textsc{First way}:

\smallskip

\nid Let $T_0=0, h_0:=1$, and define (see Figure \ref{fig2})
\begin{align*}\gs_1:=&\min\{s>0:B_s-\ul{B}_s=h_0\}, \\
R_1:=&-\ul{B}_{\gs_1}, \\
\tau_1:=&\min\{s:B_s=-R_1\},\\
h_1:=& \ol{B}_{\tau_1}-\ul{B}_{\tau_1},\\
T_1:=&T_0+\tau_1.
\end{align*}
\begin{figure}[htbp]
\begin{center}
\resizebox{10cm}{!} {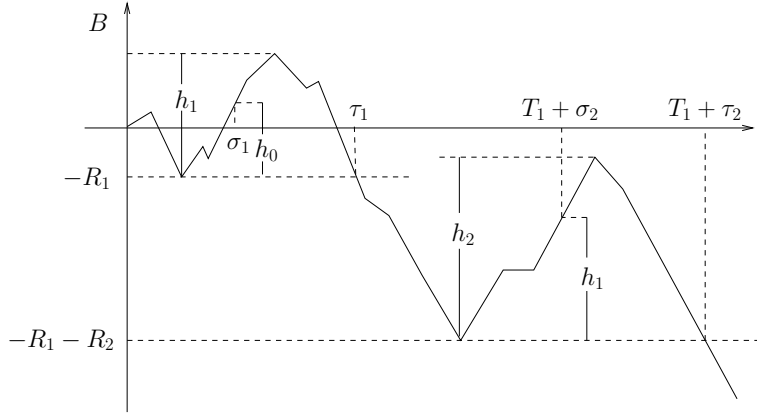} \caption{First decomposition of a one sided Brownian motion path.}
\label{fig4}
\end{center}
\end{figure}
We repeat the same procedure for the process
$(B_{s+T_1}-B_{T_1})_{s\ge0}$ with the roles of $h_0, T_0$ played
now by $h_1, T_1$. Thus we define $\gs_2, \tau_2, h_2, R_2, T_2$, and we
continue recursively.

Using the strong Markov property and an argument analogous to the one in the
proof of Lemma \ref{overshoots}, we see that the random variables
$w_n:=h_n/h_{n-1}, n\ge1$ are i.i.d. and each has density $x^{-2}
1_{x\ge1}$. In particular, $\log w_1$ has the exponential distribution with
mean 1. Note that $h_n=\prod_{i=1}^n w_i$, $\nu^+(h_n)=n$, so that
$$\frac{\nu^+(h_n)}{\log
h_n}=\frac{n}{\sum_{i=1}^{n-1}\log w_i}.$$
By the law of large numbers, this converges to
1 since $\EE(\log w_1)=1$. With interpolation we show
that 
\begin{equation}\label{limitNplus}\lim_{x\to+\infty}\frac{\nu^+(x)}{\log
x}=1.\end{equation}

\medskip

\textsc{Second way:} Now we split the path of $B$ using a
different strategy. By analogy with Section \ref{structuresection}, we define 
\begin{align*}
H_\ell^+:=& \inf \{s>0: B_s=\ell\}, \\
\Theta_\ell^+:=&-\min \{B_s: s\in[0, H_\ell^+]\}.
\end{align*}
\nid Again we can see that $\Theta^+$ is piecewise constant, left continuous, and the set of points where it jumps, call it $\C{L}^+$, has 0 as only accumulation
point. 

\nid Pick $\ell\in\C{L}^+$. At $\ell$, $\Theta^+$ jumps because at height
$\ell$, the first excursion
of $\ol{B}-B$ appeared which goes deeper
than $-\Theta_{\ell}^+$. Let (see Figure \ref{fig5})
$$h(\ell):=\ell+\Theta_\ell^+.$$
\begin{figure}[htbp]
\begin{center}
\resizebox{10cm}{!} {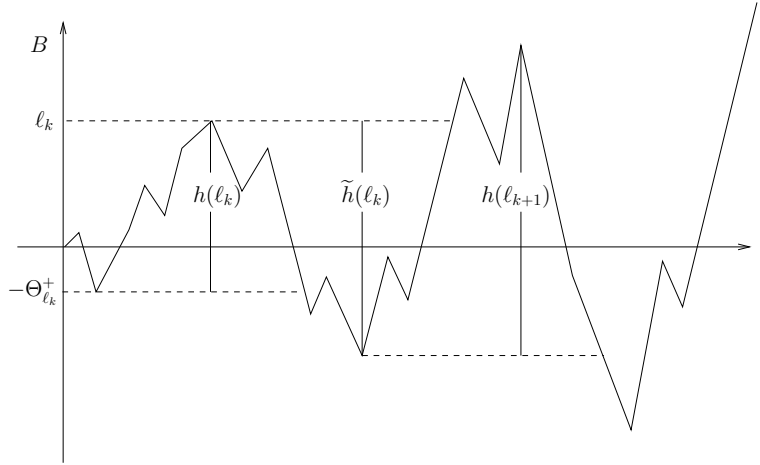} \caption{Second  decomposition of a one sided Brownian motion path.}
\label{fig5}
\end{center}
\end{figure}
The aforementioned excursion has law
$n(\,\cdot\, \, | \, \ol{\gep}\ge h)$. Call $\widetilde h(\ell)$ its
height.
Number the elements of the set $\C{L}^+\cap[1, \infty)$ in increasing
order as
$(\ell_n)_{n\ge1}$, and for each $n$, call $\gep_n$ the excursion that gives
rise to the jump at $\ell_n$. 
Then, with the same arguments as in Lemma \ref{overshoots}, we can
prove the
following.

\smallskip

\nid \textsc{Claim}: The random variables
$$\left\{\frac{\widetilde h(\ell_k)}{h(\ell_k)}, \frac{h(\ell_{k+1})}{\widetilde
h(\ell_k)}:k\ge1\right\}$$
are i.i.d., and each has density $x^{-2}1_{x\ge1}$.

\smallskip

Now note that
\begin{align*}\nu^+(h(\ell_n))&=\nu^+(h(\ell_1))-1+\C{N}(\gep_1,
h(\ell_1))+\C{N}(\gep_2, h(\ell_2))+\cdots+\C{N}(\gep_{n-1}, h(\ell_{n-1})),\\
h(\ell_n)&=h(\ell_1)\prod_{k=1}^{n-1} \frac{h(\ell_{k+1})}{\widetilde h(\ell_k)}
\frac{\widetilde h(\ell_k)}{h(\ell_k)}.
\end{align*}
The above claim gives that for each $k\ge1$, the random variables
$\log(h(\ell_{k+1})/\widetilde h(\ell_k),  \log(\widetilde h(\ell_k)/h(\ell_k))$
are exponential with mean 1, so that
\begin{equation}\label{NplusLimit}\lim_{n\to\infty} \frac{\nu^+(h(\ell_n)}{\log
h(\ell_n)}
=\frac{\EE_n(\C{N}(\gep, 1)\, | \, \ol{\gep}\ge
1)}{2}
\end{equation}
The result follows by comparing \eqref{limitNplus}, \eqref{NplusLimit}.
\end{proof}

\textbf{Acknowledgments}: I thank Balint Virag for useful discussions. 

\bibliographystyle{plainnat}

\end{document}